\newtheorem{theorem}{Theorem}[section]
\newtheorem{lemma}[theorem]{Lemma}
\newtheorem{corollary}[theorem]{Corollary}
\newtheorem{proposition}[theorem]{Proposition}
\theoremstyle{definition}
\newtheorem{definition}[theorem]{Definition}
\theoremstyle{remark}
\numberwithin{equation}{section}
\newcommand{\nmod}[1]{\,\,\text{\rm mod}\,\,#1}
\def\bfn{{\mathbf n}}
\def\bfr{{\mathbf r}}
\def\bfs{{\mathbf s}}
\def\bfu{{\mathbf u}}
\def\bfv{{\mathbf v}}
\def\bfw{{\mathbf w}}
\def\bfx{{\mathbf x}}
\def\bfy{{\mathbf y}}
\def\bfz{{\mathbf z}}
\def\N{{\mathbb N}}  
\def\R{{\mathbb R}}
\def\Z{{\mathbb Z}}\def\Q{{\mathbb Q}}
\def\grB{{\mathfrak B}}
\def\grg{{\mathfrak g}}
\def\grh{{\mathfrak h}}
\def\grm{{\mathfrak m}}\def\grM{{\mathfrak M}}
\def\grS{{\mathfrak S}}
\def\grB{{\mathfrak B}}
 \def\grX{{\mathfrak X}}
 \def\grY{{\mathfrak Y}}
\def\a{{\alpha}}  
\def\b{{\beta}}  
\def\gam{{\gamma}} 
\def\bfgam{{\boldsymbol \gam}}
\def\bfxi{{\boldsymbol \xi}}
\def\bfrho{{\boldsymbol \rho}}
\def\eps{\varepsilon}
\def\le{\leqslant} \def\ge{\geqslant}
\def\d{{\,{\rm d}}}
\begin{document}
\title[Weighted Quartic Forms]{A Quantitative Hasse Principle for Weighted Quartic Forms}
\author[Daniel Flores]{Daniel Flores}
\address{Department of Mathematics, Purdue University, 150 N. University Street, West 
Lafayette, IN 47907-2067, USA}
\email{flore205@purdue.edu}
\subjclass[2020]{11D45, 11D72, 11P55, 11E76, 11L07}
\keywords{Weighted Quartic forms, Hardy-Littlewood method.}
\date{}
\dedicatory{}
\begin{abstract}
We derive, via the Hardy-Littlewood method, an asymptotic formula for the number of integral zeros of a particular class of weighted quartic forms under the assumption of non-singular local solubility. Our polynomials $F(\bfx,\bfy) \in \Z[x_1,\ldots,x_{s_1},y_1,\ldots,y_{s_2}]$ satisfy the condition that $F(\lambda^2 \bfx, \lambda \bfy) = \lambda^4 F(\bfx,\bfy)$. Our conclusions improve on those that would follow from a direct application of the methods of Birch. For example, we show that in many circumstances the expected asymptotic formula holds when $s_1 \ge 2$ and $2s_1 + s_2 > 8$.
\end{abstract}
\maketitle

\section{Introduction}\label{sec:intro}
Historically, the Hardy-Littlewood method has been largely successful when employed to show the existence of simultaneous zeros to families of diagonal polynomials. The most famous applications being those devoted to Waring's problem and the ternary Goldbach problem. This is mainly due to the well-developed nature of the theory of exponential sums in one variable. More generally, there has been progress toward applying the circle method to show the existence of simultaneous zeros to families of forms. Most of these results stem from the work of Birch \cite{Birch1962}, in which he uses the geometry of numbers to obtain an analogue of Weyl's inequality in the homogeneous setting with the additional restriction that the dimension of the singular locus is not too large. In these results the number of variables required grows exponentially with the degree of these forms. In this paper we consider a slightly more general case in which the polynomials of interest are integral weighted forms.
\begin{definition}\label{def1.1}
    We define a polynomial $F \in \Z[x_1,\ldots,x_s]$ to be an \emph{integral weighted form} of degree $d$ if there exists a vector $\bfw \in \N^{s}$, satisfying $(w_1,\ldots,w_s) = 1$, for which the equation 
    \[F(\lambda^{w_1} x_1,\ldots, \lambda^{w_s} x_s) = \lambda^d F(x_1,\ldots,x_s),\] 
    holds for all complex $\lambda$. With this notation we say that the variable $x_i$ has weight $w_i$.
\end{definition}

We aim to leverage variables with large weight to achieve additional cancellation over the minor arcs. This allows us to reduce the number of variables needed to employ the circle method. In general, this problem is difficult without an analogue Weyl's inequality, such as that of Lemma 2.1 from \cite{Birch1962}, for weighted forms which takes into consideration the variables with large weight. A natural starting point would be to investigate weighted forms of low degree which are nearly diagonalizable, such that this problem may be tractable using known methods. In this paper, our focus lies on a specific family of weighted quartic forms. After establishing our main theorem, we provide a concise overview of other challenges potentially receptive to these concepts in section \ref{sec:furprob}. \par

First, we develop some notation. Let $s$ be a fixed natural number and suppose for each $1 \le k \le s$ we are given integers $0 \le i_k,j_k \le 2$ satisfying the inequality $i_k+j_k > 0$ for each $1 \le k \le s$. Due to the nature of the weighted quartic form we consider here it is necessary to develop an indexing system for our variables which groups them in a useful manner. We accomplish this by denoting
\[\bfx_k = (x_{k,1},\ldots,x_{k,i_k}) \text{ and } \bfy_k = (y_{k,1},\ldots,y_{k,j_k}).\]
 We then define the integral weighted quartic forms 
\begin{equation}\label{1.1}
    F_k(\bfx_k;\bfy_k) = 
H^{(2)}_{k}(\bfx_k) + \sum_{1 \le l \le i_k}H^{(2,l)}_{k}(\bfy_k)x_{k,l} + H^{(4)}_{k}(\bfy_k) ,
\end{equation}
where $H_k^{(2)} \in \Z[x_1,\ldots,x_{i_k}]$ and $H_{k}^{(2,l)} \in \Z[y_1,\ldots,y_{j_k}]$ are quadratic forms, and $H_k^{(4)} \in \Z[y_1,\ldots,y_{j_k}]$ are quartic forms. It will be important later to reference the coefficients of the polynomial $H_k^{(2)}(\bfx_k)$, we do this via the labeling
\begin{equation}\label{1.2}
    H_k^{(2)}(x_1,x_2) = \sum_{0 \le i \le 2}a_{k,i}x_1^i x_2^{2-i},
\end{equation}
where if $i_k = 1$ then we simply take $a_{k,1} = a_{k,2} = 0$ and similarly if $i_k = 0$ then $a_{k,0} = a_{k,1} = a_{k,2} = 0$. We finish this setup by implementing one final notational convention, we denote
\[\grX = (\bfx_1,\ldots,\bfx_s) \text{ and } \grY = (\bfy_1 , \ldots, \bfy_s).\]
Upon setting
\[s_1 = \sum_{1 \le k \le s}i_k\quad \text{ and} \quad s_2 = \sum_{1 \le k \le s}j_k,\]
one sees that $\grX$ is $s_1$-dimensional and $\grY$ is $s_2$-dimensional and we may sometimes write these in the form
\[\grX = (x_1,\ldots,x_{s_1}) \quad \text{and}\quad \grY = (y_1,\ldots,y_{s_2}).\]
We now define the integral weighted quartic form of interest to be
\begin{equation}\label{1.3}
    \Phi(\grX;\grY) =F_{1}(\bfx_1;\bfy_1) + \cdots + F_{s}(\bfx_s;\bfy_s).
\end{equation}
One sees that $\Phi$ is an integral weighted form where $s_1$ counts the number of variables with weight two and similarly $s_2$ counts the number of variables with weight one. If one were to look at the related form 
\[\Tilde{\Phi}(\grX;\grY) = \Phi(x_1^2,\ldots,x_{s_1}^2;y_1,\ldots,y_{s_2}),\]
then Birch's methods show us that supposing $\Tilde{\Phi}$ is non-singular it satisfies the \emph{smooth} Hasse principle (as in \cite{Browning2017}) as soon as $s_1+s_2$ is at least 49, this can even be improved to 30 variables by a result of Marmon and Vishe \cite{Marmon2019}. This then allows us to say that our weighted form $\Phi$ also satisfies the smooth Hasse principle as soon as $s_1+s_2 \geq 30$. We improve on this as by Theorem \ref{thrm1.2} our weighted form $\Phi$ may satisfy the smooth Hasse principle with as little as six variables.\par

Take $\grB^{(1)}$ and $\grB^{(2)}$ be cubes of equal side lengths which are at most one, in $\R^{s_1}$ and $\R^{s_2}$. We define $R(X;\Phi;\grB^{(1)},\grB^{(2)})$ to be the number of integer zeros of $\Phi$ where 
\[ \grX \in X^{1/2}\grB^{(1)} \quad \text{and} \quad \grY \in X^{1/4}\grB^{(2)}.\] 
We call a binary form $F(x,y)$ of degree $d>1$ degenerate if there exist complex numbers $\a,\b$ such that $F(x,y) = (\a x+\b y)^d$. We may then, naturally, call the polynomials $F_k$ from (\ref{1.1}) non-degenerate if every binary form used to define it is non-degenerate. With this notation in place we are now ready to state our main result.\par

\begin{theorem}\label{thrm1.2}
    Let $\Phi$ be an integral weighted quartic form of the type defined in \eqref{1.3} where for each $1 \le k \le s$ the polynomial $F_k$ is non-degenerate and the coefficient $a_{k,0}$ of $H_k^{(2)}$ is nonzero provided that $i_k >0$. Then as long as $s_1$ and $s_2$ satisfy
    \[s_2 > 16 \quad\text{when}\quad s_1 = 0,\]
    \[s_2 > 10 \quad\text{when}\quad s_1 = 1,\]
    or
    \[s_1/2+s_2/4 > 2 \quad\text{when}\quad s_1 \ge 2,\]
    then we have the asymptotic formula
    \[R(X;\Phi;\grB^{(1)},\grB^{(2)}) = X^{s_1/2 + s_2/4 - 1}\left( \sigma_{\infty}(\grB^{(1)},\grB^{(2)}) \prod_{p}\sigma_p + o(1) \right),\]
    where
    \[\sigma_p = \lim_{h \to \infty} p^{h(1-s_1-s_2)} \# \left\{(\grX,\grY) \in (\Z/p^h\Z)^{s_1+s_2}: \Phi(\grX;\grY) \equiv 0 \nmod{p^h} \right\},\]
    and
    \[\sigma_\infty(\grB^{(1)},\grB^{(1)}) = \lim_{Q \to \infty} \int_{|\b| \le Q} \int_{\grB^{(2)}} \int_{\grB^{(1)}}e(\b \Phi(\bfgam;\bfrho)) \d \bfgam \d \bfrho \d \b.\]
    Additionally, if we assume $\Phi$ has non-singular solutions locally everywhere then 
    \[\prod_{p}\sigma_p \asymp 1,\]
    and there exists a choice of cubes $\grB^{(1)}$ and $\grB^{(2)}$ for which one may show that $\sigma_\infty(\grB^{(1)},\grB^{(2)}) \asymp 1$.
\end{theorem}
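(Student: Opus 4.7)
The plan is to deploy the Hardy–Littlewood circle method. Begin by writing
\[
R(X;\Phi;\grB^{(1)},\grB^{(2)}) = \int_0^1 f(\alpha)\, d\alpha, \qquad f(\alpha) = \prod_{k=1}^s f_k(\alpha),
\]
where
\[
f_k(\alpha) = \sum_{\bfx_k \in X^{1/2}\grB^{(1)}_k}\sum_{\bfy_k \in X^{1/4}\grB^{(2)}_k} e\!\bigl(\alpha F_k(\bfx_k;\bfy_k)\bigr),
\]
with $\grB^{(i)}_k$ denoting the appropriate coordinate projection of $\grB^{(i)}$. The product decomposition over $k$ is crucial because the forms $F_k$ depend on disjoint blocks of variables. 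Dissect $[0,1)$ into major arcs $\grM$ centred at rationals with small denominators and minor arcs $\grm = [0,1)\setminus \grM$ in the standard fashion.

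The heart of the argument is the minor arc bound, and it is where the weighted structure pays dividends. For fixed $\bfy_k$, the polynomial $F_k(\bfx_k;\bfy_k)$ is \emph{quadratic} in $\bfx_k$ with leading form $H_k^{(2)}$, and the nonvanishing of $a_{k,0}$ ensures that this quadratic form is genuinely two-dimensional in an essential direction. A quadratic Weyl differencing applied to the inner sum over $\bfx_k$ therefore yields, uniformly in $\bfy_k$, cancellation close to $X^{1/2}$ per weight-two variable on $\grm$ — far superior to what quartic Weyl would deliver. The outer sum over $\bfy_k$ is then handled by quartic Weyl differencing, with the non-degeneracy hypothesis on the binary forms in $F_k$ guaranteeing favourable lattice-point estimates in the resulting auxiliary counting problems; the coupling terms $H_k^{(2,l)}(\bfy_k)x_{k,l}$ must be tracked carefully through the differencing but survive the process. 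Combining the two mechanisms explains the trichotomy in the hypotheses: when $s_1 = 0$ only quartic cancellation is available, forcing $s_2 > 16$; when $s_1 = 1$ one weight-two variable provides limited additional leverage, giving $s_2 > 10$; and when $s_1 \geq 2$ the quadratic cancellation can be exploited in two independent directions simultaneously, reducing the requirement to $2s_1 + s_2 > 8$.

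The major arc analysis follows the standard template. On $\grM$ one approximates each $f_k(\alpha)$ by a product of a complete exponential sum $S_k(q,a)$ and an oscillatory integral $I_k(\beta)$, via partial summation or Poisson summation. Integrating over $\grM$ and extending the resulting sum and integral to infinity produces
\[
\int_{\grM} f(\alpha)\, d\alpha = X^{s_1/2+s_2/4-1}\Bigl(\sigma_\infty(\grB^{(1)},\grB^{(2)})\prod_p \sigma_p + o(1)\Bigr),
\]
the convergence of the singular series and integral being a consequence of the same mean-value apparatus used on the minor arcs. Assuming non-singular local solvability, Hensel's lemma upgrades an $\F_p$ point with non-singular Jacobian to a full $p$-adic family, yielding $\sigma_p = 1 + O(p^{-1-\delta})$ for all sufficiently large $p$ and $\sigma_p > 0$ for the remaining primes; hence $\prod_p \sigma_p \asymp 1$. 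The positivity of $\sigma_\infty$ for a suitable choice of $\grB^{(1)}, \grB^{(2)}$ follows from the implicit function theorem applied at a non-singular real zero, where one localises one cube around a non-degenerate coordinate of this real solution.

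The principal obstacle will be executing the minor arc estimate in a way that the two distinct differencing strategies cooperate rather than interfere. The inner quadratic Weyl estimate must hold uniformly across $\bfy_k$, which is why the leading coefficient hypothesis $a_{k,0} \neq 0$ enters, while the outer quartic Weyl estimate on the $\bfy_k$ variables is complicated by the linear-in-$\bfx_k$ coupling $H_k^{(2,l)}(\bfy_k) x_{k,l}$; keeping the coupled contributions under control whilst ensuring that each variable, of either weight, contributes the advertised saving is the central technical challenge and is what ultimately governs the numerical thresholds in the theorem.
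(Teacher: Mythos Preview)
Your major arc sketch and the treatment of $\sigma_p$, $\sigma_\infty$ are fine and match the paper. The minor arc plan, however, has a real gap.

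The strategy you describe --- quadratic Weyl on the inner $\bfx_k$-sum uniformly in $\bfy_k$, followed by quartic Weyl on the outer $\bfy_k$-sum --- does not cohere as written. Once you bound the inner sum pointwise you have taken an absolute value, so there is no remaining oscillation in $\bfy_k$ to difference; and if instead you try to difference both layers before taking absolute values, the cross terms $H_k^{(2,l)}(\bfy_k)x_{k,l}$ do not simply ``survive the process'' --- they couple the differenced variables and you have given no mechanism for controlling them. More fundamentally, even a best-case pointwise Weyl argument cannot reach the threshold $s_1/2+s_2/4>2$: that requires mean-value input on $[0,1]$, which your proposal never invokes on the minor arcs.

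What the paper actually does is quite different. It \emph{completes the square} in the $\bfx_k$-variables (this is where $a_{k,0}\ne 0$ and the non-degeneracy of $H_k^{(2)}$ are used), obtaining substitutions $u_i = u_i(\bfx_k,\bfy_k)$ under which $F_k$ becomes a sum of a pure quadratic in $\bfu$ and a pure binary quartic $\mathfrak h_k^{(1)}(\bfy_k)$ with \emph{no mixed terms}. The resulting congruence and range constraints on $\bfu$ are removed by Fourier detector sums, at a total cost of $(\log X)^{s_1}$. This yields an identity expressing $f_k(\alpha)$ as an average of products $g_k(\alpha)h_k(\alpha)$, where $g_k$ is a genuine one- or two-variable quadratic exponential sum and $h_k$ is a binary quartic exponential sum, now completely decoupled. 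The minor arc integral is then bounded via H\"older: pointwise Weyl on the surplus factors, combined with mean values on a core set --- Hua's $L^4$ bound for $g$, Wooley's $L^8$ bound for binary quartics, and a bespoke mixed estimate
\[
\|G\,H\|_2^2 \ll PQ^{2+\varepsilon}+Q^4P^{\varepsilon}
\]
for a quadratic sum $G$ of length $P$ times a binary quartic sum $H$ of length $Q$. It is precisely this mixed $L^2$ estimate that makes the cases $s_1=2,3$ work at the sharp threshold, and it has no counterpart in your outline.
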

One sees that when $s_1 \geq 2$ the conclusion of Theorem \ref{thrm1.2} is near-optimal with respect to the number of variables required by the Hardy-Littlewood circle method. This may be seen by looking at the case where there are no mixed terms and every coefficient is 1, that is to say
\[\Phi(\grX;\grY) = \sum_{1 \le i \le s_1}x_{i}^2+\sum_{1 \le j \le s_2}y_j^4.\]
This diagonal case is a matter of classical theory. We refer the interested reader to section 15 of \cite{Vaughan2002} for further reading. From this classical case we have by the convexity bound that the expected asymptotic formula from Theorem \ref{thrm1.2} may only be obtained via a conventional circle method approach when $s_1/2 + s_2/4 > 2.$ \par

As is the case with many applications of the circle method the main difficulty we encounter are the minor arcs, here we give a rough outline of our methods for the minor arcs. Although we assume that $s_2$ is even for the purpose of outlining our strategy, our proof also covers the case when $s_2$ is odd. We begin by using Proposition \ref{prop3.1} and Lemma \ref{lemma3.2} to essentially remove all the monomials which contain variables of mixed weights and additionally diagonalize the variables with weight two because it cost us no extra effort to do so. We may do this at the cost of losing a factor of $(\log X)^{s_1}$ which is ultimately harmless. This process, in essence, allows us to bound the minor arc contribution relevant to our problem by bounding the minor arc contribution of an associated counting function. The latter function counts the number of integral zeros of the weighted quartic form
\[\Tilde{\Phi}(\grX;\grY) = \sum_{1 \le i \le s_1}a_ix_i^2 + \sum_{1 \le j \le s_2/2}H_{j}^{(4)}(y_{2j-1},y_{2j}),\]
where for each $j$, the binary quartic form $H_j^{(4)}$ is non-degenerate and our solutions satisfy
\[\grX \in X^{1/2}\Tilde{\grB}^{(1)} \quad\text{and}\quad \grY \in X^{1/4}\Tilde{\grB}^{(2)},\]
where $\Tilde{\grB}^{(1)}$ and $\Tilde{\grB}^{(2)}$ are slight modifications of the initial boxes $\grB^{(1)}$ and $\grB^{(2)}$. At this point, we can easily get enough savings in the minor arcs if $s_1 \geq 4$. When $s_1 = 0$ a generalized version of this problem has already been looked at by Wooley \cite{Wooley1999} where his lower bound of $s_2 > 16$ agrees with ours. The interesting cases are when $1 \le s_1 \le 3$. We take care of these cases via the use of the optimal mean value estimate, Lemma \ref{lemma3.3}. \\
\textbf{Acknowledgements}: During the course of the work on this paper, the author was supported by the NSF grant DMS-2001549 under the supervision of Trevor Wooley. The author is grateful for support from Purdue University. Additionally, the author would like to thank Trevor Wooley for suggesting this area of research and for his mentorship throughout the process. The author also extends sincere appreciation to the reviewer for their detailed and constructive feedback, which significantly improved the manuscript.

\section{Preliminary Setup}\label{sec:prelim} 
Our basic parameter is $X$, an (eventually) large positive number. Whenever $\varepsilon$ appears in a statement, either implicitly or explicitly, we assert that the statement holds for every $\varepsilon>0$. In this paper, implicit constants in Vinogradov's notation $\ll$ and $\gg$ may depend on $\varepsilon$ and on the coefficients of the polynomial $\Phi$. We also make use of the vector notation $\bfx = (x_1,\ldots,x_r)$ where $r$ is dependent on the context of the argument. Whenever the notation $|\bfx|$ is used for a vector $\bfx$ we mean $\max_{i}{|x_i|}$. Also, when the notation $\|f\|_p$ is used for an integrable function $f$ we mean the usual norm on $L^p([0,1])$, and we write $\|f\|_{L^p(E)}$ to denote the $L^p(E)$ norm where $E$ is some real measurable set. As is conventional in analytic number theory, we write $e(z)$ for $e^{2 \pi i z}$. \par

Consider cubes $\grB^{(1)} \subset \R^{s_1}$ and $\grB^{(2)} \subset \R^{s_2}$, each having side length $2\eta$ for some positive $\eta \le 1$. Let $\bfgam^* \in [-1,1]^{s_1}$ denote the center of $\grB^{(1)}$ and $\bfrho^* \in [-1,1]^{s_2}$ denote the center of $\grB^{(2)}$. Following the same indexing system that was set up in the introduction, we write
\[  \bfgam^* = (\bfgam^*_1,\ldots,\bfgam^*_s) \quad\text{and}\quad \bfrho^* = (\bfrho^*_1,\ldots,\bfrho^*_s), \]
where $\bfgam^*_k = (\gamma_{k,1},\ldots,\gamma_{k,i_k}) \in \R^{i_k}$ and $\bfrho^*_k = (\rho_{k,1} , \ldots,\rho_{k,j_k}) \in \R^{j_k}$. Now, for a given set $E \subset \R$, we define the boxes
\[\grB^{(1)}_k (X,E) = \left\{\bfgam \in E^{i_k} : \left|\frac{\bfgam}{X^{1/2}} - \bfgam^*_k \right| < \eta  \right\},\]
and
\[\grB^{(2)}_k (X,E) = \left\{\bfrho \in E^{j_k} : \left|\frac{\bfrho}{X^{1/4}} - \bfrho^*_k \right| < \eta\right\}.\]
Then we define 
\begin{equation}\label{2.1}
    f_{k}(\a) = \sum_{ \substack{\bfx \in \grB^{(1)}_k (X,\Z) \\ \bfy \in \grB^{(2)}_k (X,\Z)}} e\left( \a F_{k}(\bfx; \bfy) \right),\quad f(\a) = \prod_{1 \le k \le s}f_k(\a),
\end{equation}
\begin{equation}\label{2.2}
    S_{k}(q,a) = \sum_{ \substack{\bfr \in (\Z/q\Z)^{i_k} \\ \bfs \in (\Z/q\Z)^{j_k} }} e \left( \frac{a}{q} F_{k}(\bfr; \bfs) \right),\quad S(q,a) = \prod_{1 \le k \le s}S_k(q,a),
\end{equation}
and
\begin{equation}\label{2.3}
    v_{k}(X,\b) = \int_{\substack{\bfgam \in \grB^{(1)}_k (X,\R) \\ \bfrho \in \grB^{(2)}_k (X,\R)}}e \left(\b F_{k}(\bfgam; \bfrho) \right) \d \bfgam \d \bfrho ,\quad I(X,\b) = \prod_{1 \le k \le s}v_{k}(X,\b).
\end{equation}
With this notation in hand we have by orthogonality 
\begin{equation}\label{2.4}
    R(X;\Phi;\grB^{(1)},\grB^{(2)}) = \int_{0}^{1} f(\a) \d \a.
\end{equation}
We begin by first estimating the exponential sums $f_k$ near rational points.
\begin{lemma}\label{lemma2.1}
    Given $a \in \Z$ and $q \in \N$ and any $\a \in \R$ we have that
    \[f_{k}(\a) = q^{-(i_k+j_k)} S_{k}(q,a) v_{k}(X,\a - a/q) + O \left(X^{i_k/2 + j_k/4 -1/4}\left(q + |q\a - a| X \right)\right). \]
\end{lemma}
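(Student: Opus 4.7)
The plan is a standard change of variables followed by a Riemann sum approximation. First I would set $\b = \a - a/q$ and decompose each lattice point in the summation range as $\bfx = q\bfu + \bfr$ with $\bfr$ in a complete residue system modulo $q$, and similarly $\bfy = q\bfv + \bfs$. Because $F_k$ has integer coefficients, expanding $F_k(q\bfu+\bfr;q\bfv+\bfs)$ yields
\[ F_k(q\bfu+\bfr;q\bfv+\bfs) \equiv F_k(\bfr;\bfs) \mmod{q}, \]
so the factor $e((a/q)F_k)$ depends only on $\bfr,\bfs$. This allows me to factor
\[ f_k(\a) = \sum_{\substack{\bfr \in (\Z/q\Z)^{i_k}\\ \bfs \in (\Z/q\Z)^{j_k}}} e\!\left(\frac{a}{q} F_k(\bfr;\bfs)\right) T_k(\bfr,\bfs;\b), \]
where $T_k(\bfr,\bfs;\b)$ is the sum of $e(\b F_k(q\bfu+\bfr;q\bfv+\bfs))$ over those pairs $(\bfu,\bfv)$ with $q\bfu+\bfr \in \grB^{(1)}_k(X,\Z)$ and $q\bfv+\bfs \in \grB^{(2)}_k(X,\Z)$.

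Next I would approximate each inner sum $T_k(\bfr,\bfs;\b)$ by $q^{-(i_k+j_k)}v_k(X,\b)$ via a standard Riemann sum argument: partition the box $\grB^{(1)}_k(X,\R)\times\grB^{(2)}_k(X,\R)$ into sub-boxes of side $q$ in every coordinate and compare the value of $e(\b F_k)$ at the sampled lattice point on each cell with its integral over that cell. Two sources of error appear. The first comes from the variation of $e(\b F_k)$ across a single cell, for which I need the bounds $|\partial F_k/\partial x_{k,l}| \ll X^{1/2}$ and $|\partial F_k/\partial y_{k,l}| \ll X^{3/4}$ that follow directly from the shape of $F_k$ in \eqref{1.1} together with $|\bfx| \ll X^{1/2}$ and $|\bfy| \ll X^{1/4}$ in the box; summing a mean-value-theorem estimate over all $O(X^{i_k/2+j_k/4}/q^{i_k+j_k})$ cells yields a total contribution of $O(|\b|\,q\,X^{i_k/2+j_k/4+3/4})$. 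The second comes from cells meeting the boundary of the box: here the relevant surface area is $O(X^{i_k/2+j_k/4-1/4})$, so the total boundary error is $O(q\,X^{i_k/2+j_k/4-1/4})$.

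Finally, combining these estimates gives
\[ \left| T_k(\bfr,\bfs;\b) - q^{-(i_k+j_k)}v_k(X,\b) \right| \ll q^{-(i_k+j_k)} X^{i_k/2+j_k/4-1/4}(q+|\b|qX) \]
uniformly in $\bfr,\bfs$. Inserting this into the factored expression for $f_k(\a)$, estimating the error trivially by summing over the $q^{i_k+j_k}$ pairs $(\bfr,\bfs)$, and recalling that $q|\b|=|q\a-a|$ and $S_k(q,a)=\sum_{\bfr,\bfs}e((a/q)F_k(\bfr;\bfs))$, produces exactly the claimed estimate. The only real care required lies in bookkeeping the surface-area bound for the mixed-scale box, since the weighted coordinates have different natural sizes $X^{1/2}$ and $X^{1/4}$; once this is handled, the argument is entirely routine.
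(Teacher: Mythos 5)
Your proposal is correct and follows essentially the same route as the paper: the same decomposition $\bfx = q\bfx'+\bfr$, $\bfy = q\bfy'+\bfs$ with $\b = \a - a/q$, followed by the comparison of the inner sum with $q^{-(i_k+j_k)}v_k(X,\b)$, which the paper carries out by citing the mean-value-theorem argument of Davenport (Lemma 4.2) rather than writing out the Riemann-sum bookkeeping as you do. Your gradient bounds $\ll X^{3/4}$ and the boundary surface area $\ll X^{i_k/2+j_k/4-1/4}$ reproduce exactly the paper's per-residue error term, so no further comment is needed.
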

\begin{proof}
    Let $k$ be fixed, then we may parametrize $\bfx \in \grB^{(1)}_k$ and $\bfy \in \grB^{(2)}_k$ by writing 
    \[\bfx = q \bfx' + \bfr,\quad \bfy = q\bfy' + \bfs.\]
    Then upon setting $\b = \a - a/q$ one obtains
    \begin{equation}\label{2.5}
        f_k(\a) = \sum_{(\bfr,\bfs) \in (\Z/q\Z)^{i_k+j_k}}e\left( \frac{a}{q}F_k(\bfr;\bfs) \right)\sum_{\substack{\bfx' \in \grB^{(1)}_k (\bfr) \\ \bfy' \in \grB^{(2)}_k (\bfs)}}e\left( \b F_k(q\bfx' + \bfr; q\bfy' + \bfs) \right),
    \end{equation}
    where 
    \[\grB^{(1)}_k (\bfr) = \left\{\bfx' \in \Z^{i_k}: \left|\frac{q\bfx' + \bfr}{X^{1/2}} - \bfgam^*_k\right| < \eta \right\},\]
    and
    \[\grB^{(2)}_k (\bfs) = \left\{\bfy' \in \Z^{j_k}: \left|\frac{q\bfy' + \bfr}{X^{1/4}} - \bfrho^*_k \right| < \eta  \right\}.\]
    Then by making use of the mean value theorem as in \cite[Lemma 4.2]{Davenport2005} and noting that $0 < i_k+j_k$, we obtain that inner sum over $\bfx'$ and $\bfy'$ in equation \ref{2.5} is equal to
    \[q^{-(i_k+j_k)}v_k(X,\b) + O\left(\frac{X^{i_k/2+j_k/4 - 1/4}}{q^{i_k + j_k - 1}}(1 + |\b|X) \right).\]
    Upon substituting this into (\ref{2.5}) we obtain the desired result.
\end{proof}
For any $\delta > 0$ and pair $a \in \Z$, $q \in \N$ we define
\[\grM_{\delta}(q,a) = \{\a \in [0,1): |\a - a/q| \le X^{\delta -1}\},\]
and so our major arcs are defined as
\[ \grM_\delta = \bigcup_{\substack{0 \le a \le q \le X^\delta \\ (a,q) = 1}}\grM_{\delta}(q,a),\]
with the minor arcs being $\grm_{\delta} = [0,1) \backslash \grM_{\delta}$. With our major/minor arcs established we immediately obtain, via Lemma \ref{lemma2.1} and equation (\ref{2.1}), a good estimate for the exponential sum $f(\a)$ over the major arcs.
\begin{corollary}\label{cor2.2}
    Given $a \in \Z$ and $q \in \N$ and any $\a \in \grM_\delta(q,a)$ we have that
    \[f(\a) = q^{-(s_1+s_2)} S(q,a)I(X,\a - a/q) + O \left(X^{s_1/2 + s_2/4  + 2\delta - 1/4} \right).\]
\end{corollary}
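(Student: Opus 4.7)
My plan is to derive Corollary \ref{cor2.2} by applying Lemma \ref{lemma2.1} factor by factor and then multiplying out the resulting products, exploiting the fact that on a major arc both $q$ and $|q\a-a|X$ are controlled by a small power of $X$.

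First I would record the trivial size estimates: directly from the definitions one has $|f_k(\a)|\le (2\eta X^{1/2})^{i_k}(2\eta X^{1/4})^{j_k}\ll X^{i_k/2+j_k/4}$, and similarly $|q^{-(i_k+j_k)}S_k(q,a)v_k(X,\a-a/q)|\ll X^{i_k/2+j_k/4}$ (or, indeed, this follows \emph{a posteriori} from Lemma \ref{lemma2.1}). Next I would specialize Lemma \ref{lemma2.1} to $\a\in\grM_\delta(q,a)$. Since $q\le X^\delta$ and $|q\a-a|\le qX^{\delta-1}\le X^{2\delta-1}$, one obtains the simplified form
\[
f_k(\a)=M_k(\a)+E_k(\a),\qquad M_k(\a):=q^{-(i_k+j_k)}S_k(q,a)v_k(X,\a-a/q),
\]
with $|E_k(\a)|\ll X^{i_k/2+j_k/4-1/4+2\delta}$, using that the factor $q+|q\a-a|X\ll X^{2\delta}$ on the major arcs.

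Now I would expand the product
\[
f(\a)=\prod_{1\le k\le s}f_k(\a)=\prod_{1\le k\le s}\bigl(M_k(\a)+E_k(\a)\bigr).
\]
The distinguished term $\prod_k M_k(\a)$ gives precisely $q^{-(s_1+s_2)}S(q,a)I(X,\a-a/q)$, by the definitions \eqref{2.2} and \eqref{2.3} together with $s_1=\sum_k i_k$ and $s_2=\sum_k j_k$. Every other term in the expansion contains at least one factor $E_{k_0}(\a)$, and I would bound this by using the error estimate for $E_{k_0}$ and the trivial bound $|M_k(\a)|$ or $|f_k(\a)|\ll X^{i_k/2+j_k/4}$ for the remaining factors. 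Each such cross term is thus bounded by
\[
\ll X^{i_{k_0}/2+j_{k_0}/4-1/4+2\delta}\cdot\prod_{k\ne k_0}X^{i_k/2+j_k/4}\ll X^{s_1/2+s_2/4-1/4+2\delta},
\]
and since there are only $O(1)$ such terms (the number depending only on $s$), summing gives the stated error. I do not foresee a genuine obstacle here; the only mild subtlety is ensuring that the factor $q+|q\a-a|X$ from Lemma \ref{lemma2.1} truly reduces to $O(X^{2\delta})$ on $\grM_\delta(q,a)$, which is precisely what the major-arc definition guarantees.
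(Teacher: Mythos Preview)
Your proposal is correct and is exactly the argument the paper has in mind: the paper itself gives no detailed proof, merely stating that the corollary is obtained ``immediately'' from Lemma~\ref{lemma2.1} and the product definition \eqref{2.1}, and you have filled in precisely those details. One small remark: when you bound the cross terms, the cleanest way to justify using $X^{i_k/2+j_k/4}$ for \emph{every} remaining factor (without needing $2\delta\le 1/4$) is the telescoping identity $\prod_k f_k-\prod_k M_k=\sum_j\bigl(\prod_{k<j}f_k\bigr)E_j\bigl(\prod_{k>j}M_k\bigr)$, which your mention of both $|f_k|$ and $|M_k|$ already hints at.
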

Next, we define
\[\grS(Q) = \sum_{q = 1}^{Q} \sum_{\substack{a = 1 \\ (a,q) = 1}}^{q} q^{-(s_1 + s_2)}S(q,a),\]
\[J(Q,X) = \int_{|\b| \le QX^{- 1}}I(X,\b) \d \b.\]
We conclude with a Lemma.
\begin{lemma}\label{lemma2.3}
    For $0 < \delta < 1/20$ the asymptotic formula
    \[\int_{\grM_\delta}f(\a) \d \a = \grS(X^\delta) J(X^\delta,1) X^{s_1/2 + s_2/4 - 1} +  o(X^{s_1/2 + s_2/4 - 1}) ,\]
    holds
\end{lemma}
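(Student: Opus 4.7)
The plan is a routine major-arc unfolding, starting from Corollary \ref{cor2.2}. First I would verify that for $\delta < 1/20$ (so in particular $\delta < 1/3$), the arcs $\grM_\delta(q,a)$ appearing in the definition of $\grM_\delta$ are pairwise disjoint. This is a Farey-type observation: for distinct reduced fractions $a/q \neq a'/q'$ with $q,q' \le X^\delta$, one has $|a/q - a'/q'| \ge 1/(qq') \ge X^{-2\delta}$, which exceeds the diameter $2X^{\delta - 1}$ of each arc once $3\delta < 1$. Consequently,
\[
\int_{\grM_\delta} f(\a)\d\a = \sum_{q \le X^\delta}\sum_{\substack{1 \le a \le q \\ (a,q) = 1}} \int_{\grM_\delta(q,a)} f(\a)\d\a.
\]

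On each piece I would apply Corollary \ref{cor2.2} to write $f(\a) = q^{-(s_1+s_2)} S(q,a) I(X,\a - a/q) + O(X^{s_1/2 + s_2/4 + 2\delta - 1/4})$. The aggregate error this introduces is bounded by $\mathrm{meas}(\grM_\delta)$ times the pointwise error. Since the number of admissible pairs $(q,a)$ is $O(X^{2\delta})$ and each arc has length $2X^{\delta - 1}$, we have $\mathrm{meas}(\grM_\delta) \ll X^{3\delta - 1}$, so this aggregate error is $\ll X^{s_1/2 + s_2/4 + 5\delta - 5/4}$, which is $o(X^{s_1/2 + s_2/4 - 1})$ precisely because $5\delta < 1/4$; this is the source of the restriction $\delta < 1/20$. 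After changing variables $\b = \a - a/q$ in each remaining integral and recognising the sum over $(q,a)$ as $\grS(X^\delta)$, the main term becomes $\grS(X^\delta)\, J(X^\delta, X)$.

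To convert $J(X^\delta, X)$ into $X^{s_1/2 + s_2/4 - 1} J(X^\delta, 1)$, I would invoke the weighted homogeneity $F_k(\lambda^2 \bfx, \lambda \bfy) = \lambda^4 F_k(\bfx, \bfy)$. Substituting $\bfgam = X^{1/2}\bfgam'$ and $\bfrho = X^{1/4}\bfrho'$ in the integral defining $v_k(X,\b)$, the integrand transforms via $F_k(X^{1/2}\bfgam'; X^{1/4}\bfrho') = X F_k(\bfgam';\bfrho')$, the region becomes $\grB^{(1)}_k(1,\R) \times \grB^{(2)}_k(1,\R)$, and the Jacobian contributes $X^{i_k/2 + j_k/4}$; hence $v_k(X,\b) = X^{i_k/2 + j_k/4}\, v_k(1, X\b)$. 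Taking the product over $k$ yields $I(X,\b) = X^{s_1/2 + s_2/4} I(1, X\b)$, and the substitution $\tau = X\b$ then gives $J(X^\delta, X) = X^{s_1/2 + s_2/4 - 1} J(X^\delta, 1)$, which yields the claimed asymptotic. The only non-mechanical issue is the careful bookkeeping of the error term from Corollary \ref{cor2.2}; the balance $3\delta - 1 + 2\delta - 1/4 < -1$ forces $\delta < 1/20$, and everything else reduces to a scaling change of variables.
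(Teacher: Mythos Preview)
Your proposal is correct and follows essentially the same route as the paper's proof: apply Corollary~\ref{cor2.2} on each arc, bound the aggregate error using $\mathrm{meas}(\grM_\delta)\ll X^{3\delta-1}$ (which is where $\delta<1/20$ enters), and then reduce $J(X^\delta,X)$ to $X^{s_1/2+s_2/4-1}J(X^\delta,1)$ via the scaling identity $I(X,\b)=X^{s_1/2+s_2/4}I(1,X\b)$. Your added remarks on disjointness of the arcs and the explicit tracking of the exponent $5\delta-5/4$ are minor elaborations but not deviations from the paper's argument.
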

\begin{proof}
    By applying Corollary \ref{cor2.2} to each interval in the major arcs and noting that the measure of the major arcs is $O(X^{3\delta - 1})$ we see that the major arc contribution is equal to
    \[\sum_{\substack{1 \le a \le q \le X^\delta \\ (a,q) = 1}} q^{-(s_1+s_2)}S(q,a)\int_{|\a - a/q| \le X^{\delta - 1}} I(X,\a - a/q)\d\a + o(X^{s_1/2 + s_2/4 - 1}).\]
    Via the substitution $\b = \a - a/q$ we note that the inner integral is now independent of $a,q$. Thus we see that
    \[\int_{\grM_\delta}f(\a) \d\a = \grS(X^\delta) J(X^\delta,X) +  o(X^{s_1/2 + s_2/4 - 1}).\]
    All that is left to show is that
    \[J(X^\delta,X) = J(X^{\delta},1)X^{s_1/2 + s_2/4 - 1}.\]
    One establishes this by first recalling (\ref{1.3}) and note that via a change of variables we have
    \[I(X,\b) = X^{s_1/2 + s_2/4} I(1,X\b),\]
    whence
    \begin{align*}
        J(X^\delta,X) &= X^{s_1/2 + s_2/4} \int_{|\b| \le X^{\delta - 1}} I(1,X\b) \d\b \\
        &=  X^{s_1/2 + s_2/4-1} \int_{|\b'| \le X^{\delta}} I(1,\b') \d\b' \\
        &=  X^{s_1/2 + s_2/4-1}  J(X^{\delta},1).
    \end{align*}
\end{proof}

\section{The Minor Arcs}\label{sec:minor}

Before we begin, some notation must be established. Given any particular $1 \le k \le s$, let $M_k$ be the largest coefficient in absolute value of the polynomial $F_k$, and then define 
\begin{equation} \label{3.1}
    M = \max_{1 \le k \le s}20M_k^2.
\end{equation}
This number will be useful later. Referencing our labeling scheme for the coefficients of $H_k^{(2)}$ as in (\ref{1.2}), we define the constant $\Delta_k = a_{k,1}^2-4a_{k,0}a_{k,2}$ and the polynomial
\[\delta_k(\bfv) = a_{k,1}H_k^{(2,1)}(\bfv)-2a_{k,0}H_k^{(2,2)}(\bfv).\]
It is important to note that by our hypothesis in Theorem \ref{thrm1.2}, we have that if $i_k = 2$ then $a_{k,0}$ and $\Delta_k$ are nonzero, and if $i_k = 1$ then we only have that $a_{k,0}$ is nonzero. It is important that we now develop robust notation which will allow us to concisely state our results regardless of which situation we find ourselves in, it is for this reason that we take some time to establish the following quantities. \par

For any $\textbf{b} \in \Z^2$ and $\bfxi \in \R^2$ we define 
\[
\chi_{k}^{(i)}(\textbf{b},\bfxi) = 
\begin{cases}
    \frac{b_1}{2a_{k,0}} - \Delta_k \xi_1, & \text{when }i = 1,  \\
    \frac{a_{k,1}b_1}{2a_{k,0} \Delta_k} - a_{k,1}\xi_1 - \frac{b_2}{\Delta_k} + \xi_2,  & \text{when }i = 2.
\end{cases}
\]
Also, for any set $E \subset \R$, real variables $Y,Z$, and $\bfgam \in \R^{i_k}$ we define the sets $B_k(Y,Z,E,\bfgam) \subset E^{i_k}$ to be
\[
\begin{array}{ll}
\{0\} & \text{when }i_k=0, \\
\left\{b_1 \in E: \left|\frac{b_1}{2 a_{k,0} Z^{1/2}} -  \gamma_1 \right| \le Y\right\} & \text{when }i_k = 1, \\
\left\{(b_1,b_2) \in E^2: \left|\frac{b_1}{2 a_{k,0} \Delta_k Z^{1/2}} -  \gamma_1 \right| \le Y, \left|\frac{b_2}{\Delta_k Z^{1/2}} -  \gamma_2\right| \le Y\right\} & \text{when }i_k=2.
\end{array}
\]
Here, whenever it is convenient to do so, we abbreviate $B_k(Y,Z,E,\bf0)$ to $B_k(Y,Z,E)$.\par

Now for the sake of brevity and generality, we define the following table of functions\begin{align*}
    \mathfrak{h}^{(1)}_k(\bfv) &= 
\begin{cases}
    H^{(4)}_k(\bfv), & i_k = 0, \\
    H^{(4)}_k(\bfv) - \frac{1}{4a_{k,0}}\left( H_k^{(2,1)}(\bfv) \right)^2, & i_k = 1, \\
    H^{(4)}_k(\bfv) - \frac{1}{4a_{k,0}} \left( H_k^{(2,1)}(\bfv) \right)^2 + \frac{1}{4 a_{k,0} \Delta_k} \left(\delta_k(\bfv)\right)^2, & i_k = 2,
\end{cases} \\
\mathfrak{h}^{(2)}_k(\bfv; \textbf{b},\bfxi) &= 
\begin{cases}
    0, & i_k = 0, \\
    - \chi_{k}^{(1)}(\textbf{b},\bfxi) H_k^{(2,1)}(\bfv), & i_k = 1, \\
    - \chi_{k}^{(1)}(\textbf{b},\bfxi) H_k^{(2,1)}(\bfv) +  \chi_{k}^{(2)}(\textbf{b},\bfxi)\delta_k(\bfv),  & i_k = 2,
\end{cases} \\
\mathfrak{g}^{(1)}_k(\bfu) &= 
\begin{cases}
    0, & i_k = 0, \\
    \frac{u_1^2}{4a_{k,0}}, & i_k = 1, \\
    \frac{u_1^2}{4a_{k,0}} - \frac{u_2^2}{4a_{k,0}\Delta_k},  & i_k = 2,
\end{cases} \\
\mathfrak{g}^{(2)}_k(\bfu;\textbf{b},\bfxi) &= 
\begin{cases}
    0, & i_k = 0, \\
    \chi_{k}^{(1)}(\textbf{b},\bfxi) u_1, & i_k = 1, \\
    \chi_{k}^{(1)}(\textbf{b},\bfxi) u_1 - \chi_{k}^{(2)}(\textbf{b},\bfxi) u_2,  & i_k = 2,
\end{cases} \\
\mathfrak{K}_k(\bfn,\bfxi) &=
\begin{cases}
    0, & i_k = 0, \\
    n_1\xi_1, & i_k = 1, \\
    n_1 \xi_1 + n_2 \xi_2, & i_k = 2.
\end{cases}
\end{align*}
We are now well enough equipped to state the following technical result.

\begin{proposition}\label{prop3.1}
For any real $\a$, and integer $1 \le k \le s$ we have that
\begin{equation*}
f_k(\a) = \frac{1}{\#B_{k}(1,1,\N)} \sum_{\textbf{b} \in B_{k}(1,1,\N)}\int_{[0,1]^{i_k}}  K_{k}(\bfxi)  g_{k}(\a;\textbf{b},\bfxi) h_{k}(\a;\textbf{b},\bfxi)  \d\bfxi,
\end{equation*}
where
\[h_{k}(\a;\textbf{b},\bfxi)= \sum_{\bfv \in \grB^{(2)}_k (X,\Z)} e \left( \a \mathfrak{h}^{(1)}_k(\bfv) +\mathfrak{h}^{(2)}_k(\bfv;\textbf{b},\bfxi) \right),\]
\[g_k(\a;\textbf{b},\bfxi)= \sum_{|\bfu| \le MX^{1/2} } e \left( \a \mathfrak{g}^{(1)}_k(\bfu) +\mathfrak{g}^{(2)}_k(\bfu;\textbf{b},\bfxi)  \right),\]
and
\[K_k(\bfxi) = \sum_{\bfn \in B_k(\eta, X,\Z , \bfgam_k^*)} e(\mathfrak{K}_k(\bfn,\bfxi)).\]
\end{proposition}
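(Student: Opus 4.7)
The plan is to recognize the right-hand side as a Fourier-analytic repackaging of the change of variables that completes the squares in $F_k$ with respect to the weight-two variables. Setting
\[u_1=2a_{k,0}x_{k,1}+a_{k,1}x_{k,2}+H_k^{(2,1)}(\bfy_k),\qquad u_2=\Delta_k x_{k,2}+\delta_k(\bfy_k)\]
(with the obvious truncation when $i_k\le 1$), two successive completions of the square---first in $x_{k,1}$ using $a_{k,0}\ne 0$, then in $x_{k,2}$ using $\Delta_k\ne 0$---yield the polynomial identity $F_k(\bfx_k;\bfy_k)=\mathfrak{g}^{(1)}_k(\bfu)+\mathfrak{h}^{(1)}_k(\bfy_k)$. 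This identity is precisely what forces the particular shape of $\mathfrak{g}^{(1)}_k$ and $\mathfrak{h}^{(1)}_k$ in the tables of functions above.

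Starting from the right-hand side, I would expand $K_k\cdot g_k\cdot h_k$ as a triple sum over $(\bfn,\bfu,\bfv)$ and apply Fubini. The inner integration over $\bfxi\in[0,1]^{i_k}$ is handled by orthogonality, using $\int_0^1 e(c\xi)\d\xi=1$ when $c=0$ and vanishing for nonzero integer $c$. Computing the coefficients of $\xi_1$ and $\xi_2$ in $\mathfrak{K}_k+\mathfrak{g}^{(2)}_k+\mathfrak{h}^{(2)}_k$, the integral detects the unique relations
\[n_2=u_2-\delta_k(\bfv),\qquad n_1=\Delta_k\bigl(u_1-H_k^{(2,1)}(\bfv)\bigr)-a_{k,1}\bigl(u_2-\delta_k(\bfv)\bigr),\]
and a short algebraic manipulation shows that the remaining $\bfxi$-independent piece of the exponent collapses to $b_1 n_1/(2a_{k,0}\Delta_k)+b_2 n_2/\Delta_k$. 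Averaging these geometric characters over $\bfb\in B_k(1,1,\N)$ via the elementary identity that $N^{-1}\sum_{b=1}^{N}e(bm/N)$ equals $1$ when $N\mid m$ and $0$ otherwise yields the divisibility constraints $2a_{k,0}\Delta_k\mid n_1$ and $\Delta_k\mid n_2$. Writing $x_{k,1}=n_1/(2a_{k,0}\Delta_k)$ and $x_{k,2}=n_2/\Delta_k$, these divisibilities amount exactly to the integrality of $\bfx_k$, while the geometric constraint $\bfn\in B_k(\eta,X,\Z,\bfgam^*_k)$ translates verbatim into $\bfx_k\in\grB^{(1)}_k(X,\Z)$.

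With the integer $\bfx_k$ pinned down, the completed-square identity lets the remaining factor $e\bigl(\alpha(\mathfrak{g}^{(1)}_k(\bfu)+\mathfrak{h}^{(1)}_k(\bfv))\bigr)$ collapse to $e(\alpha F_k(\bfx_k;\bfv))$, so the right-hand side reduces to $\sum_{\bfx_k,\bfv} e(\alpha F_k(\bfx_k;\bfv))$ over the prescribed boxes---that is, $f_k(\alpha)$. The range $|\bfu|\le MX^{1/2}$ appearing in $g_k$ is comfortably large enough to capture every $\bfu$ so produced, thanks to the crude bound $|u_i|\ll M_k X^{1/2}$ on the defining linear expressions and the choice $M=\max_k 20M_k^2$ in \eqref{3.1}. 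The cases $i_k=1$ and $i_k=0$ follow the same scheme with natural simplifications: for $i_k=1$ only the first completion of the square and the $\xi_1,b_1$ machinery are active; for $i_k=0$ the formula degenerates entirely and the identity is tautological. The one delicate point is the sign-sensitive bookkeeping in the $\bfb$-averaging: since $\bfb$ ranges over positive integers whereas $a_{k,0}$ and $\Delta_k$ may carry either sign, one must check that the resulting geometric character sums still detect the intended divisibility. The non-vanishing hypotheses $a_{k,0}\ne 0$ (for $i_k\ge 1$) and $\Delta_k\ne 0$ (for $i_k=2$) inherited from Theorem~\ref{thrm1.2} are used precisely to ensure these characters are non-degenerate.
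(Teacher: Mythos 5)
Your proposal is correct and is essentially the paper's own argument: the same substitution $u_1=2a_{k,0}x_{k,1}+a_{k,1}x_{k,2}+H_k^{(2,1)}(\bfv)$, $u_2=\Delta_k x_{k,2}+\delta_k(\bfv)$, with the $\bfb$-averages and $\bfxi$-integrals playing exactly the role of the paper's detector functions $\kappa_k^{(i)}$ and $\lambda_k^{(i)}$ for the congruence and box conditions. The only difference is direction of travel (you verify the identity from the right-hand side, the paper derives it from $f_k(\a)$), which is purely presentational.
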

\begin{proof}
We begin by considering the case $i_k=2$. Using the substitutions $\bfv = \bfy$, 
\[u_1 = 2a_{k,0} x_1 + a_{k,1} x_2+ H_k^{(2,1)}(\bfv),\]
and
\[u_2 = \Delta_k x_2 + \delta_k(\bfv),\]
one obtains
\[F_{k}(\bfx;\bfy) = \frac{u_1^2}{4a_{k,0}} - \frac{u_2^2}{4a_{k,0}\Delta_k} + \mathfrak{h}^{(1)}_k(\bfv) .\]
This lets us rewrite the exponential sum for $f_{k}(\a)$ as long as we are careful to sum over the correct range for $u_1,u_2$. Let
\[ s_k(u_2,\bfv) = u_2 - \delta_k(\bfv), \]
and
\[t_k(\bfu,\bfv) = \Delta_k u_1 - \Delta_k H_k^{(2,1)}(\bfv) - a_{k,1} s_k(u_2,\bfv).\]
We may then define the correct range for $u_1,u_2$ by introducing the sets
\begin{align*}
    S_{k}^{(1)}(\bfv) &= \left\{ u_2 \in \Z:  s_k(u_2,\bfv) \equiv 0 \nmod{\Delta_k} \right\}, \\
    T_{k}^{(1)}(\bfv,u_2) &= \left\{ u_1 \in \Z: t_k(\bfu,\bfv) \equiv 0 \nmod{2 a_{k,0} \Delta_k} \right\}, \\
    S_{k}^{(2)}(\bfv) &= \left\{ u_2 \in \Z: \left|\frac{s_k(u_2,\bfv)}{\Delta_k X^{1/2}} -  \gamma^*_{k,2}\right| \le \eta  \right\}, \\
    T_{k}^{(2)}(\bfv,u_2) &= \left\{ u_1 \in \Z: \left|\frac{t_k(\bfu,\bfv)}{2a_{k,0}\Delta_k X^{1/2}} -\gamma^*_{k,1} \right| \le \eta  \right\}.
\end{align*}
Then setting $S_k(\bfv) = S_{k}^{(1)}(\bfv) \cap S_{k}^{(2)}(\bfv)$ and $T_k(\bfv,u_2) = T_{k}^{(1)}(\bfv,u_2) \cap T_{k}^{(2)}(\bfv,u_2)$ we may write
\begin{equation*}
    f_{k}(\a) = \sum_{\substack{\bfv \in \grB^{(2)}_k (X,\Z) }} \sum_{\substack{ u_2 \in S_k(\bfv)}} \sum_{ u_1 \in T_k(\bfv,u_2)} e \left( \a \left( \frac{u_1^2}{4a_{k,0}} - \frac{u_2^2}{4a_{k,0}\Delta_k} + \mathfrak{h}^{(1)}_k(\bfv)\right)\right).
\end{equation*}
\par

Given the definitions of $u_1,u_2$ and noting that $|\bfgam_k^*|,|\eta| \le 1$, one easily sees that $|\bfu| \le MX^{1/2}$ where $M$ is defined in (\ref{3.1}). This bound will be used later once we get rid of the pesky restrictions in the summand. We may immediately do this by using detector functions which pick up membership in $S_{k}^{(1)}(\bfv)$ and $T_{k}^{(1)}(u_2,\bfv)$. We do this by introducing the functions
\[
\kappa_{k}^{(1)}(\bfu,\bfv) = \frac{1}{2|a_{k,0} \Delta_k |}\sum_{b_1 = 1}^{2|a_{k,0} \Delta_k|} e \left( \frac{b_1}{2a_{k,0}\Delta_k} t_k(\bfu,\bfv) \right) ,
\]
\[
\kappa_{k}^{(2)}(u_2,\bfv) = \frac{1}{|\Delta_k|} \sum_{b_2 = 1}^{|\Delta_k|} e\left( \frac{b_2}{\Delta_k} s_k(u_2,\bfv) \right).
\]
We may similarly pick up membership in $S_{k}^{(2)}(\bfv)$ and $T_{k}^{(2)}(u_2,\bfv)$ by introducing the functions
\[
\lambda_{k}^{(1)}(\bfu,\bfv) = \int_{0}^{1} \sum_{ \left|\frac{n_1}{2a_{k,0} \Delta_k X^{1/2}} -\gamma^*_{k,1} \right| \le \eta } e \left( \left( n_1 - t_k(\bfu,\bfv) \right)\xi_1 \right) \d\xi_1 ,
\]
\[
\lambda_{k}^{(2)}(u_2,\bfv) = \int_{0}^{1} \sum_{\left|\frac{n_2}{\Delta_k X^{1/2}} -  \gamma^*_{k,2}\right| \le \eta} e \left( \left( n_2 - s_k(u_2,\bfv) \right) \xi_2\right) \d\xi_2.
\]
Thus $\kappa_{k}^{(2)}(u_2,\bfv)\lambda_{k}^{(2)}(u_2,\bfv) = \mathbbm{1}_{S_k(\bfv)}(u_2)$ and $\kappa_{k}^{(1)}(\bfu,\bfv)\lambda_{k}^{(1)}(\bfu,\bfv) = \mathbbm{1}_{T_k(u_2,\bfv)}(u_1)$. Plugging in these indicator functions we see that $f_{k}(\a)$ is equal to
\begin{equation*}
    \sum_{\substack{\bfv \in \grB^{(2)}_k (X,\Z) \\ |\bfu| \le MX^{1/2} } }\mathbbm{1}_{T_k(u_2,\bfv)}(u_1)\mathbbm{1}_{S_k(\bfv)}(u_2)  e \left( \a \left( \frac{u_1^2}{4a_{k,0}} - \frac{u_2^2}{4a_{k,0}\Delta_k} + \mathfrak{h}^{(1)}_k(\bfv)\right) \right).
\end{equation*}
By expanding $\mathbbm{1}_{T_k(u_2,\bfv)}(u_1)\mathbbm{1}_{S_k(\bfv)}(u_2)$ as exponential sums using our detector functions, we can interchange the order of summation and integration, making the summation ranges independent of each other. After this, we can group terms into separate exponential sums and arrive at the following equation.
\[f_{k}(\a) = \frac{1}{2|a_{k,0} \Delta_k|} \sum_{\substack{1 \le b_1 \le 2|a_{k,0}| \\ 1 \le b_2 \le |\Delta_k|}} \int_{0}^{1} \int_{0}^{1} K_k(\bfxi)g_k(\a;\textbf{b},\bfxi) h_{k}(\a;\textbf{b},\bfxi) \d\bfxi.\]
This establishes the case $i_k = 2$ of the Proposition. One may then, via a similar yet simpler proof, show that the case $i_k = 1$ follows and the case $i_k = 0$ is trivially true.
\end{proof}
\begin{lemma}\label{lemma3.2}
    Let $E \subset [0,1]$ be any Lebesgue measurable set and following our previously established indexing suppose that $\textbf{b} = (\textbf{b}_1,\ldots,\textbf{b}_s) \in \Z^{s_1}$ and $\bfxi = (\bfxi_1,\ldots,\bfxi_s) \in [0,1]^{s_1}$. In addition suppose that one has a bound of the form
    \[\Xi_E(\textbf{b};\bfxi) = \int_{E}\prod_{1 \le k \le s}|g_{k}(\a;\textbf{b}_k,\bfxi_k) h_{k}(\a;\textbf{b}_k,\bfxi_k)| \d\a \ll X^{\sigma},\]
    where $\sigma$ is fixed and the implicit constants are independent of $\textbf{b}$ and $\bfxi$. Then we have
    \[\int_{E} |f(\a)| \d\a \ll X^{\sigma}\log(X)^{s_1} .\]
\end{lemma}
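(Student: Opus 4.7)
The plan is to feed the formula for each $f_k$ supplied by Proposition \ref{prop3.1} into the product $f(\a) = \prod_k f_k(\a)$, interchange sums and integrals, apply the triangle inequality, invoke the hypothesis on $\Xi_E(\bfb;\bfxi)$ to pull out the factor $X^{\sigma}$ uniformly, and then bound the surviving integral over the linear exponential sums $K_k$ by classical means.

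More concretely, first I would substitute Proposition \ref{prop3.1} into each factor of $f(\a) = \prod_{k=1}^s f_k(\a)$, collect the product, and then interchange the order of integration so as to obtain
\[
\int_E |f(\a)|\,\d\a \le C \sum_{\bfb \in \prod_k B_k(1,1,\N)} \int_{[0,1]^{s_1}} \prod_{k=1}^s |K_k(\bfxi_k)| \cdot \Xi_E(\bfb;\bfxi)\,\d\bfxi,
\]
where $C = \prod_k (\#B_k(1,1,\N))^{-1}$. The hypothesis yields $\Xi_E(\bfb;\bfxi) \ll X^{\sigma}$ uniformly in $\bfb,\bfxi$, so it may be removed from the remaining integral. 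Moreover, each $\#B_k(1,1,\N)$ is a constant depending only on the coefficients of $F_k$, and so the outer sum over $\bfb$ contributes only an $O(1)$ factor. It therefore suffices to show that
\[
\int_{[0,1]^{s_1}} \prod_{k=1}^s |K_k(\bfxi_k)|\,\d\bfxi \ll (\log X)^{s_1}.
\]

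The key observation is that $K_k(\bfxi)$ is a separable linear exponential sum: when $i_k=1$ it is a standard geometric sum $\sum_{n_1\in I_1} e(n_1\xi_1)$ over an interval $I_1$ of length $\asymp X^{1/2}$, and when $i_k=2$ it factors as $\bigl(\sum_{n_1\in I_1} e(n_1\xi_1)\bigr)\bigl(\sum_{n_2\in I_2} e(n_2\xi_2)\bigr)$ with both $I_l$ of length $\asymp X^{1/2}$. Hence the whole integrand factors as a product over individual $[0,1]$ integrals of linear sums, each of which obeys the classical bound $|\sum_{n\in I} e(n\xi)| \ll \min(X^{1/2},\|\xi\|^{-1})$, yielding
\[
\int_0^1 \min\!\bigl(X^{1/2},\|\xi\|^{-1}\bigr)\,\d\xi \ll \log X.
\]
Taking the product over the $s_1$ factors gives the desired $(\log X)^{s_1}$ bound and completes the proof. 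The case $i_k = 0$ contributes $K_k \equiv 1$, and so adds nothing.

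There is no real obstacle here — the lemma is a tidy consequence of Proposition \ref{prop3.1} combined with the triangle inequality and the $L^1$-bound on linear exponential sums. The only delicate point to verify carefully is that $K_k(\bfxi)$ genuinely factors across coordinates (so that the multidimensional integral collapses to a product of one-dimensional ones), which is clear from the definition of $\mathfrak{K}_k$ and of the box $B_k(\eta,X,\Z,\bfgam_k^*)$.
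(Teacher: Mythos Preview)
Your proposal is correct and follows essentially the same route as the paper: apply Proposition \ref{prop3.1} to each $f_k$, take absolute values and interchange the $\alpha$-integral with the $\bfb$-sum and $\bfxi$-integral to isolate $\Xi_E(\bfb;\bfxi)$, pull out the uniform bound $X^{\sigma}$, and then bound $\int_{[0,1]^{s_1}}\prod_k |K_k(\bfxi_k)|\,\d\bfxi$ by factoring across coordinates and using the standard $L^1$-estimate for a linear exponential sum. The paper states the last step via the inequality $\int_0^1 \bigl|\sum_{|n|\le X} e(-n\xi)\bigr|\,\d\xi \ll \log X$, which is equivalent to your $\min(X^{1/2},\|\xi\|^{-1})$ formulation.
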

\begin{proof}
    Set
    \[B = \prod_{1 \le k \le s}B_k(1,1,\N) ,\quad K(\bfxi) = \prod_{1 \le k \le s} |K_{k}(\bfxi_{k})|.\]
    We may apply Proposition \ref{prop3.1} to each polynomial $f_k$ where $1 \le k \le s$. Then, upon applying the triangle inequality and switching the order of summation and integration one obtains the relation
    \begin{equation}\label{3.2}
        \int_{E}|f(\a)| \d\a \le  \frac{1}{\#B}\sum_{\mathbf{b} \in B} \int_{[0,1]^{s_1}} \Xi(\textbf{b};\bfxi) K(\bfxi) \d\bfxi.
    \end{equation}
    Substituting the bound $\Xi(\textbf{b};\bfxi) \ll X^{\sigma}$ into (\ref{3.2}) and noting that the implicit constant is independent of $\textbf{b},\bfxi$ we obtain
    \[\int_{E}|f(\a)|\d\a \ll \frac{X^{\sigma}}{\#B}\sum_{\mathbf{b} \in B} \int_{[0,1]^{s_1}}  K(\bfxi) \d\bfxi.\]
    All that is left to prove is that
    \[\frac{1}{\#B}\sum_{\mathbf{b} \in B} \int_{[0,1]^{s_1}}  K(\bfxi) \d\bfxi \ll (\log X)^{s_1},\]
    this is however immediately clear upon recalling the definition of $K(\bfxi)$ and making use of the basic estimate
    \[\int_{0}^{1}\biggl| \sum_{|n| \le X}e(-n\xi) \biggr| \d\xi \ll \log X .\]
\end{proof}

As a consequence of Lemma \ref{lemma3.2}, if we can obtain a bound for $\Xi_{\grm_\delta}(\textbf{b};\bfxi)$, we may transfer this to a bound over the minor arcs. Before obtaining such a bound we first require a mean value estimate that allows us to deal with the cases when $1 \le s_1 \le 3$.
\begin{lemma}\label{lemma3.3}
    Let $G^{(1)} \in \Q$, $G^{(2)} \in \R$. Also, let $H^{(1)} \in \Q[y_1,y_2]$ be a non-degenerate homogeneous quartic, and $H^{(2)} \in \R[y_1,y_2]$ be a non-degenerate homogeneous quadratic. Then for positive numbers $P,Q$, $|\eta| \le 1$, and $\bfrho \in [-1,1]^2$, we define the exponential sums
    \[
    G(\a) = \sum_{|x| \le P}e(\a G^{(1)} x^2 + G^{(2)} x),
    \]
    \[
    H(\a) = \sum_{\left|\frac{\bfy}{Q}-\bfrho \right| \le \eta} e\left(\a H^{(1)}(\bfy) + H^{(2)}(\bfy) \right).
    \]
    Then for large $P,Q$ we have the following mean value bound
    \[
    \left\| G(\a)H(\a) \right\|_2^2 \ll PQ^{2 + \eps} + Q^{4}P^{\eps},
    \]
    where the implicit constant is only dependent on $G^{(1)}$ and the coefficients of $H^{(1)}$.
\end{lemma}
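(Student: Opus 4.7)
My approach is to convert the $L^2$-norm into a counting problem via orthogonality and then to exploit the structure of the resulting Diophantine equation. Since the $\a$-independent terms $G^{(2)}x$ and $H^{(2)}(\bfy)$ contribute only unit-modulus factors upon squaring, after clearing denominators of $G^{(1)}$ and $H^{(1)}$ and applying Parseval's identity one obtains
\[
\|G(\a)H(\a)\|_2^2 \ll N,
\]
where $N$ denotes the number of integer quadruples $(x_1,x_2,\bfy_1,\bfy_2)$ with $|x_i|\le P$ and $|\bfy_j/Q-\bfrho|\le\eta$ satisfying
\[
G^{(1)}(x_1^2-x_2^2) + H^{(1)}(\bfy_1) - H^{(1)}(\bfy_2) = 0.
\]
I would then dichotomise on whether $x_1^2 = x_2^2$.

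\emph{Diagonal contribution $(x_1^2=x_2^2)$.} This forces $x_1=\pm x_2$, yielding $O(P)$ choices for $(x_1,x_2)$, and the equation reduces to $H^{(1)}(\bfy_1)=H^{(1)}(\bfy_2)$. Let $r(m)=\#\{\bfy\in\Z^2:H^{(1)}(\bfy)=m,\,|\bfy/Q-\bfrho|\le\eta\}$. Since $H^{(1)}$ is a non-degenerate binary quartic, its zero set is contained in a union of at most four lines through the origin, so $r(0)\ll Q$; for $m\ne 0$ the uniform representation bound $r(m)\ll Q^\eps$ follows from standard estimates on binary forms (Thue--Evertse--Stewart type bounds when $H^{(1)}$ has nonzero discriminant, with reductions to divisor or Pell-type counts when $H^{(1)}$ has repeated factors but at least two distinct roots over $\overline{\Q}$). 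Hence
\[
\sum_m r(m)^2 \le r(0)^2 + Q^\eps \sum_m r(m) \ll Q^{2+\eps},
\]
and this case contributes $O(PQ^{2+\eps})$ to $N$.

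\emph{Off-diagonal contribution $(x_1^2\ne x_2^2)$.} For each of the $O(Q^4)$ pairs $(\bfy_1,\bfy_2)$ the equation becomes $x_1^2-x_2^2=M$ with $M:=(H^{(1)}(\bfy_2)-H^{(1)}(\bfy_1))/G^{(1)}$; solutions exist only when $M\in\Z$, in which case necessarily $|M|\le 2P^2$ and the factorisation $(x_1-x_2)(x_1+x_2)=M$ yields at most $d(|M|)\ll P^\eps$ integer pairs. Summing over $(\bfy_1,\bfy_2)$ contributes $O(Q^4 P^\eps)$.

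Combining the two contributions yields the claimed bound. I expect the main technical obstacle to be the uniform representation count $r(m)\ll Q^\eps$ for $m\ne 0$: the nonzero-discriminant case can be handled by classical results on Thue equations, whereas repeated-factor cases require factoring $H^{(1)}$ over $\overline{\Q}$ into its distinct linear components (of which non-degeneracy guarantees at least two) and bounding integer solutions to each resulting subequation via divisor estimates on the linear factors, supplemented by Pell-type bounds whenever a repeated binary quadratic factor appears.
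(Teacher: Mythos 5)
Your proposal is correct and follows essentially the same route as the paper: the same orthogonality reduction to counting solutions of $G^{(1)}(x_1^2-x_2^2)=H^{(1)}(\bfy_1)-H^{(1)}(\bfy_2)$, the same diagonal/off-diagonal split on $x_1^2=x_2^2$, and the same divisor-function argument giving $O(Q^4P^{\eps})$ off the diagonal. The only difference is that for the diagonal count $\#\{H^{(1)}(\bfy_1)=H^{(1)}(\bfy_2)\}\ll Q^{2+\eps}$ the paper simply cites Theorem 2 of Wooley's paper on exponential sums over binary forms, whereas you reprove that ingredient by hand via representation-number bounds — a workable but more laborious substitute for the quoted result.
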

\begin{proof}
    Let $N$ be the smallest natural number such that $NG^{(1)}H^{(1)} \in \Z[y_1,y_2]$, then by a change of variables one has that
    \[\left\| G(\a)H(\a) \right\|_2^2 \le N\left\| G(N\a)H(N\a) \right\|_2^2,\]
    where, by orthogonality, the right hand side is bounded by $N$ times the number of integer solutions of 
    \[NG^{(1)}(x_1^2-x_2^2) = NH^{(1)}(\bfy_1) - NH^{(1)}(\bfy_2),\]
    where 
    \[|x_i| \le P \quad\text{and}\quad \left|\frac{\bfy_i}{Q} - \bfrho\right| \le \eta.\] 
    We then we split into two cases, if $|x_1| = |x_2|$, of which there are $O(P)$ many ways this can happen, then we count the number of solutions of $NH^{(1)}(\bfy_1) - NH^{(1)}(\bfy_2) = 0$. By orthogonality this quantity is equal to
    \begin{equation*}
    \Biggl\| \sum_{\left|\frac{\bfy}{Q}-\bfrho \right| \le \eta}e\left(\a NH^{(1)}(\bfy) \right) \Biggr\|^2 \d\a \ll Q^{2 + \eps},
    \end{equation*}
    where the bound comes from Theorem 2 of \cite{Wooley1999}. So in the case $|x_1| = |x_2|$ there are $O(PQ^{2 + \eps})$ choices for our variables. If $|x_1| \neq |x_2|$, then let $n = x_1^2 - x_2^2 \neq 0$, then there are at most $O(Q^4)$ choices for the $y_i$ and by the divisor estimate at most $O(P^\eps)$ choices for $x_1,x_2$. So in the case $|x_1| \neq |x_2|$ there are at most $O(Q^4 P^{\eps})$ choices for our variables. The fact that the implicit constant is dependent only on $G^{(1)}$ and the coefficients of $H^{(1)}$ is clear from our application of orthogonality and the way we chose $N$.
\end{proof}
With this mean value estimate, we are now ready to bound $\Xi_{\grm_\delta}(\textbf{b};\bfxi)$.
\begin{proposition}\label{prop3.4}
    Let $\textbf{b} = (\textbf{b}_1,\ldots,\textbf{b}_s) \in \Z^{s_1}$ and $\bfxi = (\bfxi_1,\ldots,\bfxi_s) \in [0,1]^{s_1}$. Then we have that
    \[\Xi_{\grm_\delta}(\textbf{b};\bfxi) \ll X^{s_1/2 + s_2/4 - 1 - \delta/32 + \eps},\]
    where the implicit constant is dependent only on the coefficients of $\Phi$ and $\eps$.
\end{proposition}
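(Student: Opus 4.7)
My strategy is to extract a pointwise Weyl-type saving on $\grm_\delta$ from a single factor of the integrand, and to bound the remainder via mean value estimates. Three key tools will be used: (i) Weyl's inequality for the quadratic Weyl sum $g_k$, which (when $i_k = 2$) factors as a product of two univariate quadratic sums via the diagonal structure of $\mathfrak{g}^{(1)}_k$, yielding $|g_k(\a)| \ll X^{i_k/2 - i_k\delta/4 + \eps}$ on $\grm_\delta$; (ii) a Weyl-type bound for the binary quartic sum $h_k$ via the methods of \cite{Wooley1999}, giving $|h_k(\a)| \ll X^{1/2 - c\delta + \eps}$ on $\grm_\delta$ when $j_k = 2$, and the classical quartic Weyl bound when $j_k = 1$; and (iii) the optimal mean value estimate Lemma \ref{lemma3.3}, coupling a univariate quadratic factor with a binary quartic sum to obtain $\|g_k h_j\|_2^2 \ll X^{1+\eps}$. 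Since the linear shifts $\mathfrak{g}^{(2)}_k$ and $\mathfrak{h}^{(2)}_k$ depend only on $\textbf{b}, \bfxi$ and not on $\a$, they do not interfere with the Weyl differencing or the mean value calculation, and the resulting bounds will be uniform in $(\textbf{b}, \bfxi)$.

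I would then split the proof into three cases based on the size of $s_1$. When $s_1 = 0$, every factor is a binary quartic Weyl sum, and the bound follows from a pointwise Weyl saving on one factor combined with H\"older and the mean value estimate $\|h_k\|_2^2 \ll X^{1/2+\eps}$ from \cite{Wooley1999}; the threshold $s_2 > 16$ is precisely the classical condition for eight binary quartic forms. When $s_1 = 1$, the sole $g_{k_0}$ is a univariate quadratic; I would couple it with some $h_{j_0}$ (with $j_{j_0}=2$) via Lemma \ref{lemma3.3} to obtain a sharp $L^2$ bound on the pair, then extract a pointwise Weyl saving on a further factor, and close via H\"older. When $s_1 \ge 2$, I would split each $g_k$ with $i_k = 2$ into a product of two univariate quadratic sums, take the minor-arc supremum of one of these through Weyl, pair another with some $h_j$ via Lemma \ref{lemma3.3}, and bound the remaining factors via trivial or $L^2$ estimates. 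The hypothesis $s_1/2 + s_2/4 > 2$ is exactly what is required to force the resulting exponent to fall strictly below $s_1/2 + s_2/4 - 1$.

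The main obstacle will be establishing the binary quartic Weyl-type bound on $\grm_\delta$: one must confirm that the Weyl differencing argument of \cite{Wooley1999} survives the presence of the linear-in-$\bfv$ shift $\mathfrak{h}^{(2)}_k$, with the non-degeneracy of $\mathfrak{h}^{(1)}_k$ playing an essential role. After that, the remaining work is exponent bookkeeping: tracking how the pointwise Weyl savings and the couplings via Lemma \ref{lemma3.3} interact under H\"older's inequality to produce the final saving of $X^{-\delta/32}$. The particular constant $1/32$ emerges from the worst case amongst the three subcases, with sharper constants available at the cost of a more involved analysis.
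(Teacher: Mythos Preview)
Your proposal is correct and follows essentially the same approach as the paper. The paper likewise decomposes $\prod_k g_k$ into univariate quadratic sums $G_i$ and $\prod_k h_k$ into binary quartic sums $H_j$ (combining two single-variable $h_k$'s when needed), splits the integrand as $S_1 S_2$, applies $\Xi_{\grm_\delta} \le \|S_2\|_{L^\infty(\grm_\delta)} \|S_1\|_1$, extracts the Weyl/Wooley pointwise saving from a factor in $S_2$, and bounds $\|S_1\|_1$ via H\"older together with Hua's Lemma, Lemma~\ref{lemma3.3}, and Theorem~2 of \cite{Wooley1999}, with the case analysis on $s_1$ exactly as you describe; the only cosmetic difference is that the paper further subdivides the regime $s_1 \ge 2$ into $s_1 = 2, 3, \ge 4$ to make the H\"older pairings explicit.
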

\begin{proof}
Referencing the definition of the exponential sums $g_k$ from Proposition \ref{prop3.1} one notes that in the case that $i_k = 2$ this exponential sum may be split and written as the product of two single variable exponential sums. Thus one may write
\begin{equation}\label{3.4}
    \prod_{1 \le k \le s}g_k(\a;\textbf{b}_k,\bfxi_k) = \prod_{1 \le i \le s_1}G_i(\a;\textbf{b},\bfxi),
\end{equation}
where, in reference to the notation of Lemma \ref{lemma3.3}, each of the exponential sums $G_i(\a;\textbf{b},\bfxi)$ satisfy the conditions on $G$ with the rational number $G^{(1)}$ dependent only on the coefficients of $\Phi$, the real number $G^{(2)}$ dependent on the coefficients of $\Phi$, $\textbf{b}$, and $\bfxi$. Finally, the length of the summation is $P = MX^{1/2}$. \par

By a similar thought process,  we may write
\begin{equation}\label{3.5}
    \prod_{1 \le k \le s}h_k(\a;\textbf{b}_k,\bfxi_k) = \left(\prod_{1 \le j \le s_2/2} H_j(\a;\textbf{b},\bfxi) \right) H_0(\a;\textbf{b},\bfxi),
\end{equation}
where instead of splitting exponential sums we sometimes combine exponential sums over a single variable into an exponential sum over two variables. In reference to the notation of Lemma \ref{lemma3.3}, each of the exponential sums $H_i(\a;\textbf{b},\bfxi)$ for $1 \le j \le s_2 /2$ satisfy the conditions on $H$ with the rational binary quartic $H^{(1)}$ dependent only on the coefficients of $\Phi$, the real binary quadratic $H^{(2)}$ dependent on the coefficients of $\Phi$, $\textbf{b}$, and $\bfxi$. Finally, the length of the sum being  $Q = X^{1/4}$ and some center $\bfrho = (\rho_1,\rho_2)$ coming from $\bfrho^*$. Additionally, the function $H_0(\a;\textbf{b},\bfxi)$ is either $1$ if $s_2$ is even, or it is equal to some $h_k(\a;\textbf{b}_k,\bfxi_k)$ where $j_k = 1$ if $s_2$ is odd. It is important to note here that with respect to $\a$ the arguments of all of these exponential sums are dependent only on the coefficients of $\Phi$. Setting $m = \min\{s_1,4\}$ we define
\[
S_1(\a;\textbf{b},\bfxi) = 
\begin{cases}
    \prod\limits_{1 \le j \le 8} H_j(\a; \textbf{b},\bfxi), & s_1 = 0, \\
    G_1(\a;\textbf{b},\bfxi) \prod\limits_{1 \le j \le 5}H_j(\a; \textbf{b},\bfxi), & s_1 = 1, \\
    \prod\limits_{1 \le i \le m}G_i(\a;\textbf{b},\bfxi) \prod\limits_{1 \le j \le 4-m}H_j(\a; \textbf{b},\bfxi), & s_1 \ge 2,
\end{cases}
\]
then in reference to equations (\ref{3.4}) and (\ref{3.5}) we implicitly define $S_2(\a;\textbf{b},\bfxi)$ to be the product of the remaining exponential sums such that it satisfies
\[S_1(\a;\textbf{b},\bfxi)S_2(\a;\textbf{b},\bfxi) =  \prod_{1 \le k \le s}g_k(\a;\textbf{b}_k,\bfxi_k) h_k(\a;\textbf{b}_k,\bfxi_k).\] \par
Thus, by an application of H\"{o}lder's inequality we obtain the bound
\begin{equation}\label{3.6}
    \Xi_{\grm_\delta}(\textbf{b},\bfxi) \le \left\| S_2(\a;\textbf{b},\bfxi) \right\|_{L^{\infty}(\grm_\delta)} \left\| S_1(\a;\textbf{b},\bfxi) \right\|_1.
\end{equation}
By our conditions on the number of variables from Theorem \ref{thrm1.2}, it must be the case that the function $S_2(\a;\textbf{b},\bfxi)$ is a non-empty product of exponential sums which contains at least one of the following three exponential sums,
\[G_{s_1}(\a;\textbf{b},\bfxi), H_{s_2/2}(\a;\textbf{b},\bfxi), \text{ or } H_0(\a;\textbf{b},\bfxi).\]
Via a direct application of either Weyl's inequality (see \cite[Lemma 2.4]{Vaughan1997}) or Theorem 1 from \cite{Wooley1999} we obtain the following bounds,

\[\sup_{\a \in \grm_\delta}|G_{s_1}(\a;\textbf{b},\bfxi)| \ll X^{1/2 - \delta/4 + \eps},\]

\[\sup_{\a \in \grm_\delta}|H_0(\a;\textbf{b},\bfxi)| \ll X^{1/4 - \delta/32 + \eps},\]
and
\[\sup_{\a \in \grm_\delta}|H_{s_2/2}(\a;\textbf{b},\bfxi)| \ll X^{1/2 - \delta/16 + \eps}.\]
We note here that, by our previous comment, the implicit constants arising from these pointwise bounds are only dependent on the coefficients of $\Phi$ and $\eps$. Regardless of which situation we find ourselves in we, at worst, obtain the bound
\begin{equation}\label{3.7}
    \left\| S_2(\a;\textbf{b},\bfxi) \right\|_{L^{\infty}(\grm_\delta)} \ll 
\begin{cases}
X^{s_2/4 - 4 - \delta/32 + \eps }, & s_1 = 0, \\
X^{s_2/4 - 5/2 - \delta/32 + \eps }, & s_1 = 1, \\
X^{s_1/2 + s_2/4 - 2 - \delta/32 + \eps }, & s_1 \ge 2.    
\end{cases}  
\end{equation} \par
All that is left is to bound $\left\| S_1(\a;\textbf{b},\bfxi) \right\|_1$. There are a few cases to consider. If $s_1 \geq 4$, by H\"{o}lder's inequality and Hua's Lemma (see \cite[Lemma 2.5]{Vaughan1997}) we have that $\left\| S_1(\a;\textbf{b},\bfxi) \right\|_1$ is bounded above by
\begin{equation*}
    \prod_{1 \le i \le 4} \left\|G_i(\a;\textbf{b},\bfxi)\right\|_4 \ll X^{1+\eps}.
\end{equation*}
If $s_1 = 3$, by H\"{o}lder's inequality, Hua's Lemma, and Lemma \ref{lemma3.3} we see that $\left\| S_1(\a;\textbf{b},\bfxi) \right\|_1$ is bounded above by
\begin{equation*}
    \left\|G_1(\a;\textbf{b},\bfxi)H_1(\a;\textbf{b},\bfxi)\right\|_2 \left\|G_2(\a;\textbf{b},\bfxi) \right\|_4 \left\|G_3(\a;\textbf{b},\bfxi) \right\|_4 \ll X^{1+\eps}.
\end{equation*}
If $s_1 = 2$, by H\"{o}lder's inequality and Lemma \ref{lemma3.3} we see that again $\left\| S_1(\a;\textbf{b},\bfxi) \right\|_1$ is bounded above by
\begin{equation*}
    \left\|G_1(\a;\textbf{b},\bfxi)  H_1(\a;\textbf{b},\bfxi) \right\|_2 \left\|G_2(\a;\textbf{b},\bfxi) H_2(\a;\textbf{b},\bfxi)\right\|_2 \ll X^{1+\eps},
\end{equation*}
If $s_1 = 1$, by H\"{o}lder's inequality, Lemma \ref{lemma3.3}, and Theorem 2 from \cite{Wooley1999} have that $\left\| S_1(\a;\textbf{b},\bfxi) \right\|_1$ is bounded above by
\[\|G_1(\a;\textbf{b},\bfxi) H_1(\a;\textbf{b},\bfxi)\|_2 \prod_{2 \le j \le 5} \|H_j(\a;\textbf{b},\bfxi)\|_8 \ll X^{2+\eps}.\]
Finally, if $s_1 = 0$, by H\"{o}lder's inequality and Theorem 2 from \cite{Wooley1999} have that $\left\| S_1(\a;\textbf{b},\bfxi) \right\|_1$ is bounded above by
\[\prod_{1 \le j \le 8} \|H_j(\a;\textbf{b},\bfxi)\|_8 \ll X^{3 + \eps}.\]
Combining these bounds we obtain,
\begin{equation}\label{3.8}
    \left\| S_1(\a;\textbf{b},\bfxi) \right\|_1 \ll 
    \begin{cases}
    X^{3 + \eps}, & s_1 = 0, \\
    X^{2 + \eps}, & s_2 = 1, \\
    X^{1 + \eps}, & s_2 \ge 2.
    \end{cases}
\end{equation}
We, again, note that all of of our implicit constants are only dependent on the coefficients of $\Phi$ and $\eps$. Combining the bounds (\ref{3.6}), (\ref{3.7}), and (\ref{3.8}) we obtain the desired result.
\end{proof}
Combining the result of Proposition \ref{prop3.4} with Lemma \ref{lemma3.2} with $E = \grm_\delta$, and Lemma \ref{lemma2.3} we deduce the following result.
\begin{corollary}\label{cor3.5}
    For any $0 < \delta < 1/20$ we have that
    \[R(X;\grB_1,\grB_2) = \mathfrak{S}(X^\delta)J(X^{\delta},1) X^{s_1/2 + s_2/4-1} + o(X^{s_1/2 + s_2/4 - 1} ) .\]
\end{corollary}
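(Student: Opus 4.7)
The plan is to combine the three pieces directly, with minor arc contribution absorbed into the error term of the major arc asymptotic. First I would start from the orthogonality identity \eqref{2.4}, which writes $R(X;\Phi;\grB^{(1)},\grB^{(2)}) = \int_0^1 f(\a) \d\a$, and split the integral over the dissection $[0,1) = \grM_\delta \cup \grm_\delta$. The major arc contribution is handled immediately by Lemma \ref{lemma2.3}, which yields
\[\int_{\grM_\delta} f(\a) \d\a = \grS(X^\delta) J(X^\delta,1) X^{s_1/2 + s_2/4 - 1} + o(X^{s_1/2 + s_2/4 - 1}).\]

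For the minor arcs, I would apply Proposition \ref{prop3.4} to obtain the uniform-in-$(\textbf{b},\bfxi)$ bound $\Xi_{\grm_\delta}(\textbf{b};\bfxi) \ll X^{s_1/2 + s_2/4 - 1 - \delta/32 + \eps}$, and then invoke Lemma \ref{lemma3.2} with $E = \grm_\delta$ and exponent $\sigma = s_1/2 + s_2/4 - 1 - \delta/32 + \eps$ to transfer this bound up to $f$, giving
\[\int_{\grm_\delta} |f(\a)| \d\a \ll X^{s_1/2 + s_2/4 - 1 - \delta/32 + \eps} (\log X)^{s_1}.\]
Since $\eps > 0$ is arbitrary, choose $\eps < \delta/64$; then the right-hand side is $O(X^{s_1/2 + s_2/4 - 1 - \delta/64} (\log X)^{s_1}) = o(X^{s_1/2 + s_2/4 - 1})$, because the polynomial saving $X^{-\delta/64}$ dominates the logarithmic loss.

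Adding the major and minor arc contributions, the minor arc integral is absorbed into the $o(X^{s_1/2 + s_2/4 - 1})$ term, yielding the claimed asymptotic formula. There is essentially no obstacle here — the real work was already done in Proposition \ref{prop3.4}, in the mechanism of Lemma \ref{lemma3.2} that extracts a minor-arc $L^1$ bound from a uniform bound on $\Xi_{\grm_\delta}$, and in the major-arc asymptotic of Lemma \ref{lemma2.3}. The only point requiring a moment of care is the legality of the choice $0 < \delta < 1/20$, which is compatible with both Lemma \ref{lemma2.3} and the implicit constraints underlying Proposition \ref{prop3.4}, so the conclusion holds in the stated range of $\delta$.
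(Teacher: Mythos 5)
Your proposal is correct and follows exactly the paper's route: the corollary is deduced by combining Proposition \ref{prop3.4} with Lemma \ref{lemma3.2} (taking $E = \grm_\delta$) to dispose of the minor arcs, and Lemma \ref{lemma2.3} for the major arcs. Your extra remark about choosing $\eps$ small relative to $\delta$ so that the power saving beats the $(\log X)^{s_1}$ loss is the only detail the paper leaves implicit.
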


\section{The Singular Series}\label{sec:singser}
We begin by establishing the absolute convergence of the \emph{complete} singular series
\begin{equation}\label{4.1}
    \grS = \sum_{q = 1}^{\infty} A(q),\text{ where} \quad A(q)= \sum_{\substack{a = 1 \\ (a,q) = 1}}^{q} q^{-(s_1+s_2)}S(q,a).
\end{equation}
In order to achieve this we introduce the auxiliary functions, for $1 \le k \le s$ and fixed $\bfu \in \Z^{i_k}$, $\bfv \in \Z^{j_k}$, defined by
\[
x_{k}^{(1)}(\bfu,\bfv) = 2a_{k,0}u_1 + a_{k,1}u_2 + H_k^{(2,1)}(\bfv) \quad,\quad x_{k}^{(2)}(u_2,\bfv) = \Delta_k u_2 + \delta_k(\bfv),
\]
where, if $\bfu \in \Z$ we take $u_2 = 0$. Next, define 
\begin{align*}
    A_k &= 
    \begin{cases}
        1, & i_k = 0, \\
        4a_{k,0}, & i_k = 1, \\
        4 a_{k,0} \Delta_k, & i_k = 2,
    \end{cases} \\
    \tilde{\mathfrak{g}}_k(\bfu,\bfv) &= 
    \begin{cases}
        0, & i_k = 0, \\
        x_{k}^{(1)}(\bfu,\bfv)^2, & i_k = 1, \\
        \Delta_k x_{k}^{(1)}(\bfu,\bfv)^2 - x_{k}^{(2)}(u_2,\bfv)^2, & i_k = 2,
    \end{cases} \\
    \tilde{\mathfrak{h}}_k(\bfv) &=
\begin{cases}
    H^{(4)}_k(\bfv), & i_k = 0, \\
    4a_{k,0}H^{(4)}_k(\bfv) - \left( H_k^{(2,1)}(\bfv) \right)^2, & i_k = 1, \\
    4 a_{k,0} \Delta_k H^{(4)}_k(\bfv) - \Delta_k\left( H_k^{(2,1)}(\bfv) \right)^2 + \left( \delta_k(\bfv) \right)^2, & i_k = 2.
\end{cases} \\
\end{align*}
Then we define the exponential sum
\[
T_k(q,a) = \sum_{\bfv \in (\Z/q\Z)^{j_k}} e \left( \frac{a}{q} \tilde{\mathfrak{h}}_k(\bfv)\right) \sum_{\bfu \in (\Z/q\Z)^{i_k}} e\left( \frac{a}{q}\tilde{\mathfrak{g}}_k(\bfu,\bfv) \right).
\]
One then has a relation between $S_k$ from (\ref{2.2}) and the function $T_k.$
\begin{lemma}\label{lemma4.1}
    For any natural numbers $a,q$ one has that
    \begin{equation*}
    S_k(q,a) = \left(\frac{(a,A_k)}{A_k} \right)^{i_k + j_k}T_k\left(\frac{A_k q}{(a,A_k)},\frac{a}{(a,A_k)} \right).
\end{equation*}
\end{lemma}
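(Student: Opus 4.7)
The plan is to view the lemma as a formal consequence of the completing-the-square identity that already underlies Proposition \ref{prop3.1}, combined with a rescaling of the common denominator so that the quadratic parts become perfect squares (up to the factor $A_k$).

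First I would verify the core algebraic identity
\[
A_k\, F_k(\bfr;\bfs) \;=\; \tilde{\mathfrak{g}}_k(\bfr,\bfs) + \tilde{\mathfrak{h}}_k(\bfs)
\]
for each of the three cases $i_k\in\{0,1,2\}$. In the case $i_k=2$ this is exactly the completion of the square already performed in the proof of Proposition \ref{prop3.1}: the computation there gives $F_k = \tfrac{u_1^2}{4a_{k,0}} - \tfrac{u_2^2}{4a_{k,0}\Delta_k} + \mathfrak{h}_k^{(1)}(\bfs)$ with $u_1 = x_k^{(1)}(\bfr,\bfs)$ and $u_2 = x_k^{(2)}(r_2,\bfs)$, and multiplying through by $A_k=4a_{k,0}\Delta_k$ clears the denominators to produce exactly $\tilde{\mathfrak{g}}_k+\tilde{\mathfrak{h}}_k$. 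The cases $i_k=1$ (complete the square only in $r_1$, with $A_k=4a_{k,0}$) and $i_k=0$ (where $\tilde{\mathfrak{g}}_k=0$ and $\tilde{\mathfrak{h}}_k=H_k^{(4)}$) are immediate.

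Next, I would substitute this identity into the exponential sum $S_k(q,a)$ and rewrite the phase as
\[
\frac{a}{q}F_k(\bfr;\bfs) \;=\; \frac{a}{A_k q}\bigl(\tilde{\mathfrak{g}}_k(\bfr,\bfs)+\tilde{\mathfrak{h}}_k(\bfs)\bigr) \;=\; \frac{a/d}{A_k q/d}\bigl(\tilde{\mathfrak{g}}_k(\bfr,\bfs)+\tilde{\mathfrak{h}}_k(\bfs)\bigr),
\]
where $d=(a,A_k)$. The point of pulling out $d$ is notational: $a/d$ and $A_k/d$ are integers, so the right-hand side is a legitimate exponential sum against the modulus $q' := A_k q/d$ with numerator $a' := a/d$.

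The final step is a standard periodicity/lifting argument. Since $\tilde{\mathfrak{g}}_k$ and $\tilde{\mathfrak{h}}_k$ have integer coefficients, the phase $e\bigl(\tfrac{a'}{q'}(\tilde{\mathfrak{g}}_k(\bfr,\bfs)+\tilde{\mathfrak{h}}_k(\bfs))\bigr)$ equals $e(\tfrac{a}{q}F_k(\bfr;\bfs))$ and therefore depends on $(\bfr,\bfs)$ only modulo $q$. Hence summing $(\bfr,\bfs)$ over the larger set $(\Z/q'\Z)^{i_k+j_k}$ rather than $(\Z/q\Z)^{i_k+j_k}$ simply multiplies the sum by the index $(q'/q)^{i_k+j_k} = (A_k/d)^{i_k+j_k}$. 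The enlarged sum is precisely $T_k(A_k q/d,\, a/d)$ by definition, so rearranging gives
\[
S_k(q,a) \;=\; \left(\frac{d}{A_k}\right)^{i_k+j_k} T_k\!\left(\frac{A_k q}{d},\,\frac{a}{d}\right),
\]
which is the claim. The only mild obstacle is ensuring the case analysis on $i_k$ is done consistently (in particular checking that the degenerate definitions $A_k=1$ and $\tilde{\mathfrak{g}}_k\equiv 0$ when $i_k=0$ make the formula tautological), but nothing beyond bookkeeping is required.
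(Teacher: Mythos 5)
Your proposal is correct and follows essentially the same route as the paper: establish the identity $A_k F_k(\bfr;\bfs)=\tilde{\mathfrak{g}}_k(\bfr,\bfs)+\tilde{\mathfrak{h}}_k(\bfs)$ via the completion of the square from Proposition \ref{prop3.1}, then set $q'=A_kq/(a,A_k)$, $a'=a/(a,A_k)$ and use periodicity mod $q$ (noting $q\mid q'$) to relate the sum over $(\Z/q\Z)^{i_k+j_k}$ to that over $(\Z/q'\Z)^{i_k+j_k}$. This is exactly the paper's argument, with the bookkeeping spelled out slightly more explicitly.
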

\begin{proof}
    One begins by noting that
\begin{align*}
    S_k(q,a) &= \sum_{\substack{\bfu \in (\Z/q\Z)^{i_k} \\ \bfv \in (\Z/q\Z)^{j_k}}} e \left( \frac{a}{q A_k} \left( \Tilde{\grh}_k(\bfv) + \Tilde{\grg}_k(\bfu,\bfv) \right) \right).
\end{align*}
Upon setting 
\[a' = \frac{a}{(a,A_k)}\quad \text{and} \quad q' = \frac{A_k q}{(a,A_k)},\]
one sees from periodicity that
\[\sum_{\substack{\bfu \in (\Z/q\Z)^{i_k} \\ \bfv \in (\Z/q\Z)^{j_k}}} e \left( \frac{a}{q A_k} \left( \Tilde{\grh}_k(\bfv) + \Tilde{\grg}_k(\bfu,\bfv) \right) \right) = \left( \frac{q}{q'} \right)^{i_k+j_k} T_k(q',a').\]

\end{proof}
\par
We now note that both $S_k$ and $T_k$ satisfy the quasi-multiplicative property that when $(a,q) = (b,r) = (q,r) = 1$ we have
\begin{equation}\label{4.3}
    S_k(qr, ar+bq) = S_k(q,a)S_k(r,b),
\end{equation}
and
\begin{equation}\label{4.4}
    T_k(qr, ar+bq) = T_k(q,a)T_k(r,b).
\end{equation}
The proof of this is exactly the same as the one given for Lemma 2.10 in \cite{Vaughan1997}. Upon recalling (\ref{4.1}) and (\ref{2.2}) we see that (\ref{4.3}) and an application of the Chinese Remainder Theorem leads us to conclude that $A(q)$ is a multiplicative function, thus to establish the absolute convergence of the complete singular series we need only provide a good bound on $A(q)$ when $q$ is a prime power.
\begin{lemma}\label{lemma4.2}
    Given a prime $p$ and any $h \in \N$ we have that
    \[A(p^h) \ll
    \begin{cases}
        p^{-5/4}, & h = 1, \\
        p^{-9h/16}, & h \ge 2.
    \end{cases}
    \]
\end{lemma}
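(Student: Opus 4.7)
The plan is to combine the multiplicativity of $A$ with the structural decomposition provided by Lemma~\ref{lemma4.1}, and then to bound each diagonalized sum $T_k(p^h, a)$ by exploiting its factorization into quadratic Gauss sums and a complete quartic exponential sum. Since $A$ is multiplicative in $q$ (via \eqref{4.3} and the Chinese Remainder Theorem), it suffices to treat each prime power separately. By Lemma~\ref{lemma4.1}, up to a fixed factor depending only on $A_k$ and $(a, A_k)$, the analysis reduces to bounding $T_k$ at moduli that, for all but finitely many primes $p$, are simply $p^h$ with $(a, p) = 1$. The exceptional primes dividing some $A_k$ are finite in number and contribute only bounded constants.

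For such a $T_k(p^h, a)$, I diagonalize the quadratic form $\tilde{\mathfrak{g}}_k$ in $\bfu$ (for each fixed $\bfv$) by completing the square. Since the leading coefficients are coprime to $p$, this produces $i_k$ one-variable quadratic Gauss sums modulo $p^h$, each of absolute value exactly $p^{h/2}$, together with a $\bfv$-dependent phase that is absorbed into the quartic polynomial in the outer sum. Consequently, $|T_k(p^h, a)| \le p^{h i_k/2}$ times the modulus of a complete exponential sum of the quartic $\tilde{\mathfrak{h}}_k(\bfv)$ modulo $p^h$. For $h \ge 2$ one applies Weyl-type bounds for quartic exponential sums on $\Z/p^h\Z$; the non-degeneracy hypotheses of Theorem~\ref{thrm1.2} guarantee that $\tilde{\mathfrak{h}}_k$ is non-degenerate modulo $p$ for almost all $p$, producing savings of order $p^{-h\sigma j_k}$ over the trivial bound, for some $\sigma>0$. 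Multiplying these bounds over $k$, dividing by the normalization $p^{h(s_1+s_2)}$, and summing over $a \bmod p^h$ with $(a,p) = 1$ yields
\[
A(p^h) \ll p^{h(1 - s_1/2 - \sigma s_2 + \eps)},
\]
which, under the constraints on $s_1, s_2$ imposed by Theorem~\ref{thrm1.2}, is at most $p^{-9h/16}$. For the case $h = 1$, one instead appeals to Weil/Deligne bounds for complete exponential sums modulo a prime, which give full square-root cancellation in the $\bfv$-sum; combined with the Gauss-sum factor and the sum over $a \bmod p$, this produces the stronger bound $A(p) \ll p^{-5/4}$.

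The main obstacle lies in tracking the non-degeneracy hypotheses of Theorem~\ref{thrm1.2}---non-degeneracy of each binary form in $F_k$ together with $a_{k,0}, \Delta_k \neq 0$---through the diagonalization, and verifying that $\tilde{\mathfrak{h}}_k$ is non-singular modulo $p$ for all but finitely many $p$ so that the Weyl and Weil bounds apply with uniform savings. Handling the case $i_k = 0$ separately is also delicate, since no Gauss-sum saving is available and all the needed cancellation must come from the quartic estimate on $H_k^{(4)}$ alone. Finally, the bounded contributions from the finitely many exceptional primes (dividing some $A_k$ or a relevant discriminant) must be absorbed into the implicit constants without disturbing the uniformity of the bound in $p$, ensuring the overall convergence of the singular series.
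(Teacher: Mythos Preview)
Your proposal is correct and follows essentially the same route as the paper: reduce $S_k(p^h,a)$ to $T_k(p^h,a)$ via Lemma~\ref{lemma4.1}, observe that for $p\nmid A_k$ the $\bfu$-sum in $T_k$ factors into $i_k$ quadratic Gauss sums of modulus $p^{h/2}$ (independent of $\bfv$), bound the residual quartic sum over $\bfv$ by a Weil-type estimate when $h=1$ and a Weyl-type estimate when $h\ge 2$, multiply over $k$, and invoke the constraints on $s_1,s_2$ from Theorem~\ref{thrm1.2}. The only notable difference is that the paper is more conservative at $h=1$: it applies Corollary~2F of \cite{Schmidt2006} to save only $p^{-1/4}$ per $\bfv$-variable (square-root cancellation in one variable, trivial in the other), whereas you invoke Deligne for full square-root cancellation; either saving already yields $A(p)\ll p^{-5/4}$ under the hypotheses, so this does not affect the outcome.
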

\begin{proof}
    First, we may suppose that $p > M$ or else the result is trivial since $2 \le p \le M = O(1)$ whence $p \asymp 1$. First, we proceed by bounding $T_k(p^h,a)$  for some $a$ which is relatively prime to $p$ and transferring this information to a bound on $S_k(p^h,a)$. \par
    
    Consider the case in which $i_k = 2$. Since $p > M$ it must be the case that $(p^h,A_k) = 1$ for all $1 \le k \le s$, thus for any fixed $u_2,\bfv$ we see that the set of residues
    \begin{equation*}
        \left\{2a_{k,0} u_1 + a_{k,1} u_2 + H_k^{(2,1)}(\bfv) \nmod{p^h} : 1 \le u_1 \le p^h \right\},
    \end{equation*}
     is in bijective correspondence with $\Z/p^h\Z$. Thus we have
    \begin{align*}
        \sum_{1 \le u_1 \le p^h}e \left(\frac{a\Delta_k}{p^h}\left( x_{k}^{(1)}(\bfu,\bfv) \right)^2 \right) &= \sum_{1 \le x \le p^h}e \left(\frac{a\Delta_k}{p^h}x^2 \right), \\
    \end{align*}
    note that this is independent of $u_2$ and $\bfv$. The same reasoning also leads to the conclusion that
    \[\sum_{1 \le u_2 \le p^h}e \left(-\frac{a}{p^h}\left(x_{k}^{(2)}(u_2,\bfv) \right)^2 \right) = \sum_{1 \le y \le p^h}e \left(-\frac{a}{p^h}y^2 \right),\]
    which is independent of $\bfv$. Combining these we see that
    \[\sum_{\bfu \in (\Z/p^h\Z)^2}e\left(\frac{a}{p}\tilde{\mathfrak{g}}_k(\bfu,\bfv) \right) = \sum_{1 \le x \le p^h}e \left(\frac{a\Delta_k}{p^h}x^2 \right)\sum_{1 \le y \le p^h}e \left(-\frac{a}{p^h}y^2 \right), \]
    since this is independent of $\bfv$ we conclude that $T_k(p^h,a)$ is bounded by
    \begin{equation*}
        \left|\sum_{1 \le x \le p^h}e \left(\frac{a\Delta_k}{p^h}x^2 \right) \right|  \left|\sum_{1 \le x \le p^h}e \left(-\frac{a}{p^h}x^2 \right) \right| \left|\sum_{\bfv \in (\Z/p^h\Z)^{j_k}} e \left( \frac{a}{p^h}\tilde{\mathfrak{h}}_k(\bfv) \right)\right|.
    \end{equation*} \par
    
    In the case $h = 1$, one may use the standard Gauss sum estimate to obtain square root cancellation in the first two exponential sums. We then utilize Corollary 2F of \cite{Schmidt2006} on the third exponential sum to obtain square root cancellation over one of the variables while trivially summing over any remaining variables. This yields the bound
    \[
    T_k(p,a) \ll  p^{1 + 3j_k/4}.
    \]
    In the case $h \ge 2$ we may apply Weyl's inequality to the first two exponential sums and either Weyl's inequality if $j_k =1$ or Theorem 1 from \cite{Wooley1999} if $j_k = 2$ to obtain the bound
    \[T_k(p^h,a) \ll p^{h(1 + 7j_k/8 + \eps)}.\]
    All in all we have established that for $i_k = 2$ we have the bound
    \begin{equation}\label{4.7}
        T_k(p^h,a) \ll  \begin{cases}
                    p^{i_k/2+ 3j_k/4}, & h = 1, \\
                    p^{h(i_k/2 + 7j_k/8 + \eps)}, & h \ge 2. 
                    \end{cases} 
    \end{equation}
    One may easily extend this bound to work for $i_k = 1$ or $0$ by noting that the only element of our argument that may change would be that we need only consider $i_k$ many quadratic exponential sums each of which obtains square root cancellation. \par
    
    Now we will show that the bound (\ref{4.7}) may also be applied to $S_k(p^h,a)$. Since $(p^h,A_k) = 1$ for all $1 \le k \le s$ and that $a$ is relatively prime to $p$, we have by the Chinese Remainder Theorem that there exists 
    \[b \in (\Z/p^h\Z)^{\times} \quad \text{and}\quad c_k \in \left(\Z/\frac{A_k}{(a,A_k)} \Z \right)^{\times} , \]
    such that
    \[\frac{a}{(a,A_k)} = b \frac{A_k}{(a,A_k)} + c_k p^h.\]
    Then by Lemma \ref{lemma4.1} and (\ref{4.4}) we conclude that
    \begin{align*}
        S_k(p^h,a) = \left(\frac{(a,A_k)}{A_k}\right)^{i_k+j_k} T_k \left(\frac{A_k}{(a,A_k)},c_k \right) T_k\left(p^h, b\right) \ll T_k(p^h,b),
    \end{align*}
    whence by (\ref{4.7})
    \begin{equation*}
        S_k(p^h,a) \ll  \begin{cases}
                    p^{i_k/2+ 3j_k/4}, & h = 1, \\
                    p^{h(i_k/2 + 7j_k/8 + \eps)}, & h \ge 2. 
                    \end{cases} 
    \end{equation*}
    Upon recalling (\ref{2.2}) and (\ref{4.1}) one may then conclude that
    \[A(p^h) \ll 
    \begin{cases}
    p^{1-s_1/2 - s_2/4},    & h = 1, \\
    p^{h(1 -s_1/2 - s_2/8 + \eps)}, & h \ge 2.
    \end{cases}\]
    One may derive that 
    \[\frac{s_1}{2} + \frac{s_2}{4} \ge \frac{9}{4} \quad \text{and} \quad \frac{s_1}{2} + \frac{s_2}{8} \ge \frac{13}{8},\] 
    for integer values of $s_1$, $s_2$ which satisfy our hypothesis concerning $s_1$ and $s_2$ in Theorem \ref{thrm1.2}, hence we conclude that
    \[
    A(p^h) \ll 
    \begin{cases}
        p^{-5/4}, & h = 1, \\
        p^{-h(5/8 - \eps)}, & h \ge 2.
    \end{cases}
    \]
    Which proves the result by simply substituting $5/8-\eps$ with $9/16$.
\end{proof}
With this bound established we are ready to show the absolute convergence of the singular series.
\begin{lemma}\label{lemma4.3}
    The complete singular series $\grS$ is absolutely convergent and additionally we have that
    \[|\grS - \grS(Q)| \ll Q^{-1/32}.\]
\end{lemma}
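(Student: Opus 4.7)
The approach is Rankin's trick applied to the multiplicative function $A(q)$. I would first confirm multiplicativity: by the quasi-multiplicative relation \eqref{4.3} together with the Chinese Remainder Theorem parametrization of $(\Z/q_1q_2\Z)^\times$ for coprime $q_1,q_2$, one sees immediately that $A(q_1q_2)=A(q_1)A(q_2)$. Consequently $|A|$ is also multiplicative, and for any real shift $s \ge 0$ we have the formal Euler product expansion
\[\sum_{q=1}^\infty q^s |A(q)| = \prod_p\left(1 + \sum_{h=1}^\infty p^{sh}|A(p^h)|\right).\]

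Next I would choose $s = 1/32$ and verify the Euler product on the right converges. Invoking Lemma \ref{lemma4.2},
\[\sum_{h=1}^\infty p^{h/32}|A(p^h)| \ll p^{1/32 - 5/4} + \sum_{h \ge 2} p^{h(1/32 - 9/16)} \ll p^{-39/32} + p^{-17/16},\]
where the geometric series $\sum_{h \ge 2} p^{-17h/32}$ is dominated by its initial term. Since both exponents are strictly less than $-1$, summing over primes $p$ yields $\sum_p \sum_{h \ge 1} p^{h/32}|A(p^h)| < \infty$, so the product is finite and $\sum_q q^{1/32}|A(q)|$ converges.

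Finally, Rankin's inequality gives
\[\sum_{q > Q} |A(q)| \le Q^{-1/32} \sum_{q=1}^\infty q^{1/32}|A(q)| \ll Q^{-1/32},\]
which is the claimed tail bound; taking $Q = 1$ (and using $|A(1)| = 1$) simultaneously yields absolute convergence of $\grS$. The only free parameter in the argument is the Rankin exponent $s$, subject to $s < 1/4$ (so that $\sum_p p^{s-5/4}$ converges on the $h=1$ tier) and $s < 9/16$ (so that the geometric series in $h$ converges at each prime). Since both constraints are comfortably satisfied by $s = 1/32$, no genuine obstacle arises, and the value $1/32$ on the right-hand side of the lemma is dictated purely by the desire to state a clean exponent.
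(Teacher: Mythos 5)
Your proposal is correct and is essentially the paper's own argument: multiplicativity of $A(q)$ from \eqref{4.3} and the Chinese Remainder Theorem, the Euler product for $\sum_q q^{1/32}|A(q)|$, the bound $\sum_{h\ge 1} p^{h/32}|A(p^h)| \ll p^{-1-1/16}$ from Lemma \ref{lemma4.2}, and Rankin's trick with exponent $1/32$ to get the tail bound $\sum_{q>Q}|A(q)| \ll Q^{-1/32}$. The only difference is cosmetic: you additionally record the admissible range of the Rankin exponent, which the paper leaves implicit.
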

\begin{proof}
    By Lemma \ref{lemma4.2} there exists a constant $C$ dependent only on the coefficients of the polynomial $\Phi$ such that for every prime $p$, we have that
    \[\sum_{h = 1}^{\infty}p^{h/32}\left|A(p^h) \right| \le C p^{-1-1/16}.\]
    Hence by the Euler product
    \[\sum_{q>Q}|A(q)| \le \sum_{q = 1}^{\infty}\left(\frac{q}{Q} \right)^{1/32} |A(q)| \le Q^{-1/32} \prod_{p} \left(1 + Cp^{-1-1/16} \right) \ll Q^{-1/32}. \]
    Where the last inequality is due to the infinite product being known to be absolutely convergent.
\end{proof}
With this result we have by the same analysis as in section 2.6 in \cite{Vaughan1997} that
\[\grS = \prod_{p}\sigma(p),\]
where
\[\sigma_{p} = \lim_{h \to \infty} p^{h(1-s_1-s_2)} \#\{(\bfx,\bfy) \in (\Z/p^h\Z)^{s_1+s_2}: \Phi(\bfx,\bfy) \equiv 0 \nmod{p^h}\}. \]
Combining Lemma \ref{lemma4.3} with Corollary \ref{cor3.5}, one may apply Hensel's Lemma to derive that the existence of non-singular $p$-adic zeroes guarantees that $\sigma(p) > 0$ for all primes. 

We summarize our findings with a Corollary.
\begin{corollary}\label{cor4.3}
    For any $0 < \delta < 1/20$ we have that
    \[R(X;\grB_1,\grB_2) = J(X^{\delta},1) \prod_{p}\sigma(p) X^{s_1/2 + s_2/4 - 1} + o(X^{s_1/2 + s_2/4 - 1}) .\]
    If we additionally assume $p$-adic solubility of $\Phi$ then $\prod_{p}\sigma(p) > 0$.
\end{corollary}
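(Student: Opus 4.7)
The plan is to assemble the asymptotic formula by combining the three ingredients already in hand: Corollary \ref{cor3.5}, Lemma \ref{lemma4.3}, and Hensel's Lemma. First I would invoke Corollary \ref{cor3.5} to write
\[R(X;\grB_1,\grB_2) = \grS(X^\delta) J(X^\delta,1) X^{s_1/2+s_2/4-1} + o(X^{s_1/2+s_2/4-1}).\]
By Lemma \ref{lemma4.3} we have $\grS(X^\delta) = \grS + O(X^{-\delta/32})$, and the Euler product $\grS = \prod_{p} \sigma(p)$ noted just before the corollary identifies the main term. The substitution introduces an error of size at most a constant multiple of $J(X^\delta,1)\, X^{s_1/2+s_2/4-1-\delta/32}$, which is $o(X^{s_1/2+s_2/4-1})$ so long as $J(X^\delta,1) = o(X^{\delta/32})$.

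The auxiliary ingredient required is therefore a crude upper bound $J(X^\delta,1) \ll 1$, which in turn reduces to absolute convergence of the singular integral $\int_{\R} I(1,\b)\, \d\b$. Under the non-degeneracy hypotheses on each $F_k$ in Theorem \ref{thrm1.2} and the variable count imposed there, a standard application of integration by parts or van der Corput's lemma supplies decay estimates for each factor $v_k(1,\b)$ of sufficient strength to render $I(1,\b)$ integrable over the whole real line; the argument parallels the singular integral analysis of \S2.6 of \cite{Vaughan1997}. Feeding this back into the displayed expression yields the asymptotic formula.

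For the positivity claim I would argue prime by prime: any non-singular $p$-adic zero of $\Phi$ lifts through the $p$-adic tower by Hensel's Lemma, so the defining sequence for $\sigma(p)$ stabilizes at a positive limit and $\sigma(p) > 0$ for every prime $p$. Combined with the absolute convergence of the Euler product from Lemma \ref{lemma4.3}, this forces $\prod_p \sigma(p) > 0$. The main obstacle here is really just the singular integral bound, but even this is routine under the hypotheses of Theorem \ref{thrm1.2}; the substantive circle method and local density estimates have already been done in Sections \ref{sec:minor} and \ref{sec:singser}.
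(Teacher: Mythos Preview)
Your proposal follows the same skeleton as the paper: invoke Corollary~\ref{cor3.5}, replace $\grS(X^\delta)$ by $\grS=\prod_p\sigma(p)$ via Lemma~\ref{lemma4.3}, and appeal to Hensel's Lemma for the positivity of each $\sigma(p)$. The paper's own justification is nothing more than the sentence preceding the corollary.

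Where you go beyond the paper is in flagging that the substitution $\grS(X^\delta)=\grS+O(X^{-\delta/32})$ only yields the stated asymptotic once one knows $J(X^\delta,1)=o(X^{\delta/32})$; the paper is silent on this at the point Corollary~\ref{cor4.3} is stated. Your proposed remedy is absolute integrability of $I(1,\b)$ via van~der~Corput or integration by parts. The paper instead defers the issue to \S\ref{sec:singint}: Lemma~\ref{lemma5.1} shows that $J(Q,1)$ converges (conditionally, through Fourier's integral theorem) to a finite limit $\sigma_\infty$, and boundedness of $J(X^\delta,1)$ is an immediate consequence. So both routes close the gap, but they are genuinely different: the paper's argument is softer and specific to a box centred at a nonsingular real zero, whereas your stationary-phase estimate would give the stronger conclusion $\int_\R |I(1,\b)|\,\d\b<\infty$ and would apply to an arbitrary box. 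One caution: the decay of $v_k(1,\b)$ for the binary quartic pieces is not quite as automatic as in the single-variable monomials of Vaughan's \S2, and the non-degeneracy hypothesis on $H_k^{(4)}$ has to be used; it is doable, but calling it ``routine'' understates the care required.
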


\section{The Singular Integral}\label{sec:singint}
Here, we establish the convergence of the \emph{complete} singular integral
\[\sigma_{\infty}(\grB^{(1)},\grB^{(2)}) = \lim_{Q \to \infty}J(Q,1) = \lim_{Q \to \infty} \int_{|\b| \le Q}I(1,\b) \d\b.\]
Thus far we have not specified the center, $\bfxi^* := (\bfgam^*, \bfrho^*)$, nor the side length, $2\eta$, of the cubes $\grB^{(1)}$,  $\grB^{(2)}$. Here, we show that there exists a choice of these free variables such that one may show the convergence of the \emph{complete} singular integral. Our strategy will be the same as outlined in chapter 16 of \cite{Davenport2005}.

\begin{lemma}\label{lemma5.1}
    Suppose that $\Phi$ has non-singular real solutions. Then there exists a cube $\grB =\grB^{(1)} \times \grB^{(2)} \subset \R^{s_1 + s_2}$ with center $\bfxi^*$ and side length $2\eta^*$ such that the limit
    \[\sigma_{\infty}(\grB^{(1)},\grB^{(2)}) = \lim_{Q \to \infty} \int_{|\b| \le Q} I(1,\b) \d\b,\]
    exists and is strictly positive, hence
    \[J(Q,1) = \sigma_{\infty}(\grB^{(1)},\grB^{(2)}) + o(1).\]
\end{lemma}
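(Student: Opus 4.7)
The strategy, following Chapter 16 of \cite{Davenport2005}, is to dualize the Fourier integral defining $J(Q,1)$ and exploit the hypothesized non-singular real zero via the implicit function theorem. By assumption there exists $\bfxi^0 = (\bfgam^0, \bfrho^0) \in \R^{s_1+s_2}$ with $\Phi(\bfxi^0) = 0$ and $\nabla \Phi(\bfxi^0) \neq 0$. After a harmless rescaling we may assume $|\bfxi^0| \le 1/2$. Fix one coordinate $w$ (either some $x_{k,l}$ or some $y_{k,l}$) for which $\partial \Phi / \partial w \neq 0$ at $\bfxi^0$, and set the center $\bfxi^* = \bfxi^0$. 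Choose the side length parameter $\eta^* > 0$ small enough that $|\partial \Phi / \partial w| \ge c > 0$ throughout $\grB = \grB^{(1)} \times \grB^{(2)}$; the implicit function theorem then ensures that $\{\Phi = 0\} \cap \grB$ is a smooth graph over the remaining coordinates, and that $w \mapsto \Phi(w, \bfxi')$ is a $C^1$-diffeomorphism from its $w$-interval onto some interval $\calT(\bfxi')$ about $0$.

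Writing $\bfxi = (w, \bfxi')$ with $\bfxi'$ collecting the other $s_1+s_2-1$ coordinates, Fubini yields
\[J(Q,1) = \int_{\grB} \int_{-Q}^{Q} e(\b \Phi(\bfxi)) \d\b \d\bfxi = \int_{\grB} \frac{\sin(2\pi Q \Phi(\bfxi))}{\pi \Phi(\bfxi)} \d\bfxi.\]
Performing the substitution $w \mapsto \tau = \Phi(w, \bfxi')$ for each fixed $\bfxi'$ in the projection $\grB'$ of $\grB$ onto the remaining coordinates, with smooth inverse Jacobian $\Psi(\tau, \bfxi') := |\partial \Phi/\partial w|^{-1}$ evaluated at the preimage point, we obtain
\[J(Q,1) = \int_{\grB'} \int_{\calT(\bfxi')} \frac{\sin(2\pi Q \tau)}{\pi \tau} \Psi(\tau, \bfxi') \d\tau \d\bfxi'.\]

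Passing to the limit is now classical Dirichlet-kernel analysis: since $\Psi$ is $C^1$ in $\tau$ and $\calT(\bfxi')$ is a compact interval containing a neighborhood of $0$, integration by parts gives
\[\int_{\calT(\bfxi')} \frac{\sin(2\pi Q \tau)}{\pi \tau} \Psi(\tau, \bfxi') \d\tau = \Psi(0, \bfxi') + O(Q^{-1}),\]
uniformly in $\bfxi' \in \grB'$. Integrating over the compact region $\grB'$ produces
\[J(Q,1) = \sigma_\infty(\grB^{(1)}, \grB^{(2)}) + O(Q^{-1}),\quad \sigma_\infty(\grB^{(1)}, \grB^{(2)}) = \int_{\grB'} \Psi(0, \bfxi') \d\bfxi'.\]
Since the integrand $\Psi(0, \bfxi') = |\partial \Phi / \partial w|^{-1}$ is strictly positive and $\grB'$ has positive measure, the singular integral is strictly positive, and both conclusions of the lemma follow.

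The main technical point is to ensure that the $O(Q^{-1})$ error in the integration by parts is uniform in $\bfxi' \in \grB'$. This requires that the endpoints of $\calT(\bfxi')$ remain bounded away from $0$ as $\bfxi'$ varies; this follows from continuity of $\Phi$ and $\partial \Phi / \partial w$ once $\eta^*$ is chosen small enough that the unique zero of $w \mapsto \Phi(w, \bfxi')$ inside the $w$-interval is a perturbation of the original zero, and does not approach the boundary. A secondary subtlety is that the implicit function theorem only produces a local diffeomorphism; again, shrinking $\eta^*$ resolves this, and since the lemma only asserts existence of some cube, we are free to do so.
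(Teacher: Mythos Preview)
Your approach is the same as the paper's (Davenport, Chapter~16): center the box at a non-singular real zero, change one coordinate to $\tau = \Phi$, and reduce to a Dirichlet-kernel limit. The gap is in the sentence asserting that the endpoints of $\calT(\bfxi')$ stay bounded away from $0$ once $\eta^*$ is small. For a \emph{cube} this cannot be arranged by shrinking alone: the implicit function $w = W(\bfxi')$ satisfies $\nabla W = -(\partial_w\Phi)^{-1}\nabla_{\bfxi'}\Phi$, and the Lipschitz constant of $W$ near the center is a fixed ratio of partial derivatives of $\Phi$ at $\bfxi^*$, independent of $\eta^*$. When that ratio exceeds $1$ (which is generic once $s_1+s_2>2$), there will be $\bfxi'$ arbitrarily close to the boundary of $\grB'$ for which the zero $W(\bfxi')$ sits at or outside the edge of the $w$-interval, so one endpoint of $\calT(\bfxi')$ is $0$ and your $O(Q^{-1})$ bound blows up. Your final formula $\sigma_\infty = \int_{\grB'}\Psi(0,\bfxi')\,\d\bfxi'$ is then also off: the correct domain is only those $\bfxi'$ for which $0$ lies in the interior of $\calT(\bfxi')$.

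The paper sidesteps this by reversing the order of integration: it first integrates the Jacobian over the $\bfxi'$-region $\grB(\eta^*,\zeta) = \{\bfxi' : |\xi_1(\zeta,\bfxi')| < \eta^*\}$ to form a single function $V(\zeta)$ of bounded variation, and only then applies Fourier's integral theorem to the one-dimensional integral $\int_{-\sigma}^{\sigma}\frac{\sin(2\pi Q\zeta)}{\pi\zeta}V(\zeta)\,\d\zeta$, whose endpoints $\pm\sigma$ are fixed. The boundary effect you are fighting is absorbed into the variation of $V$. Alternatively you could keep your order and replace the uniform $O(Q^{-1})$ claim by a dominated-convergence argument (the inner Dirichlet integrals are uniformly bounded in $Q$ and $\bfxi'$), but you must then correct the domain in the limiting formula. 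Either way positivity survives, since the integrand is strictly positive on a neighborhood of the center.
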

\begin{proof}
    Let $\bfxi^*$ be a non-singular real solution to $\Phi$, then define
\[c_i = \frac{\partial \Phi}{\partial \xi_i}(\bfxi^*),\]
for each $1 \le i \le s_1+s_2$. Without loss of generality we may suppose that $c_1 \neq 0$ then by Taylor's Theorem
\[\Phi(\bfxi^* + \bfxi) =  \sum_{1 \le i \le s_1+s_2}c_i \xi_i + P(\bfxi),\]
where $P$ is a polynomial which is $O(|\xi|^2)$ near $\xi = 0$. Call $\zeta = \Phi(\bfxi^* + \bfxi)$, then by the implicit function Theorem there then exists a strictly positive real number $\eta$ for which on the neighborhood $|\bfxi| < \eta$ the variable $\xi_1$ can be defined in terms of $\xi_2,\ldots,\xi_{s_1+s_2}$ and $\zeta$. From the above we see that this function is of the form
\[\xi_1 = \frac{\zeta}{c_1} - \sum_{2 \le i \le s_1+s_2}\frac{c_i}{c_1} \xi_i + P_1(\zeta,\xi_2,\ldots,\xi_{s_1+s_2}), \]
where $P_1$ is a multiple power series with terms of degree at least two. Taking partial derivatives with respect to $\zeta$ yields
\[\frac{\partial\xi_1}{\partial \zeta} = \frac{1}{c_1} + \frac{\partial P_1}{\partial \zeta}(\zeta,\xi_2,\ldots,\xi_{s_1+s_2}).\]
By continuity there exists a function $\sigma(\eta)$ which goes to zero as $\eta$ goes to zero which satisfies the statement
\[|\bfxi| \le \eta \text{ implies that } |\zeta(\bfxi)| \le \sigma(\eta).\]
Thus, we choose $\eta^*$ to be sufficiently small such that
\[
\left|\frac{\partial P_1}{\partial \zeta}(\zeta,\xi_2,\ldots,\xi_{s_{1}+s_2})\right|< \frac{1}{2|c_1|},
\]
holds for $|\zeta| \le \sigma(\eta^*)$ and $|\xi_i| \le \eta^*$ for $2 \le i \le s_1 + s_2$. \par
Thus, having chosen the center of our cube $\grB$ to be $\bfxi^*$ and the side length to be $2\eta^*$ we have by our above analysis that
\begin{align*}
    \sigma_{\infty}(\grB^{(1)},\grB^{(2)}) &= \lim_{Q \to \infty}  \int_{[-\eta^*,\eta^*]^{s_1+s_2}} \frac{\sin (2 \pi Q \Phi(\bfxi^* + \bfxi))}{\pi \Phi(\bfxi^* + \bfxi)} \d\bfxi \\
    &= \lim_{Q \to \infty}  \int_{-\sigma(\eta^*)}^{\sigma(\eta^*)} \frac{\sin (2 \pi Q \zeta)}{\pi \zeta}V(\zeta) \d\zeta.
\end{align*}
Where
\[V(\zeta) = \int_{\grB(\eta^*,\zeta)}\left( \frac{1}{c_1} + \frac{\partial P_1}{\partial \zeta}(\zeta,\xi_2,\ldots,\xi_{s_1+s_2})\right) \d\xi_2 \cdots \d\xi_{s_1+s_2},\]
and $\grB(\eta^*,\zeta)$ is the set of $\xi_2,\ldots,\xi_{s_1 + s_2}$ satisfying $|\xi_i| < \eta$ for which 
\[|\xi_1(\zeta,\xi_2,\ldots,\xi_{s_1+s_2})| < \eta^*.\] 
It is clear that $V(\zeta)$ is of bounded variation because it has both left and right bounded derivatives everywhere. Thus by Fourier's integral Theorem the limit as $Q \to \infty$ exists and we conclude that
\[\sigma_{\infty}(\grB^{(1)},\grB^{(2)}) = V(0).\]
The value $V(0)$ is strictly positive by construction since 
\[\left|\frac{1}{c_1} + \frac{\partial P_1}{\partial \zeta}(0,\xi_2,\ldots,\xi_{s_1+s_2})\right| > \frac{1}{2|c_1|},\] 
for $(\xi_2,\ldots,\xi_{s_1+s_2}) \in \grB(\eta^*,0)$ and $\text{mes}\{\grB(\eta^*,0)\} \gg 1$. 
\end{proof}
Upon combining the result of Lemma \ref{lemma5.1} with Corollary \ref{cor4.3} and setting $\delta < 1/20$ we have finally proven Theorem \ref{thrm1.2}.

\section{Further Problems}\label{sec:furprob} 

As referenced in section \ref{sec:intro}, there are many possible weighted forms one could investigate. Here, we outline a general class of problems that may be tackled in a manner similar to the one we discuss in this paper. Let $d \ge 2$ be given, then if we let $H^{(n)}$ denote a form of degree $n$ one may investigate the zeros of the weighted $2d$ form
\[\Phi_{2d}(\bfx;\bfy) = H^{(2)}(\bfx) + \sum_{1 \le i \le s_1}x_i H_i^{(d)}(\bfy) + H^{(2d)}(\bfy).\]
Note that $\Phi_{2d}$ is a general weighted $2d$ form in which the variable weights are either $1$ or $d$, up to a relabeling of variables. The most pressing issue is non-trivially bounding the minor arcs. We start by first setting $\bfv = \bfy$ and finding a suitable non-singular linear substitution $\bfx \mapsto \Tilde{\bfu}$ such that
\[H^{(2)}(\bfx) + \sum_{1 \le i \le s_1}x_i H^{(d)}(\bfy) = \frac{1}{Q}\left( \sum_{1 \le i \le s_1}\Tilde{a}_i \Tilde{u}_i^2 +\Tilde{u}_i\Tilde{H}_i^{(d)}(\bfv)\right),\]
where $Q,\Tilde{a}_1,\ldots,\Tilde{a}_{s_1} \in \Z\backslash\{0\}$ and the $\Tilde{H}_i^{(d)}$ are integral linear combinations of the $H_i^{(d)}$. Upon making the substitutions
\[u_i = 2\Tilde{a}_i\Tilde{u}_i + \Tilde{H}_i^{(d)}(\bfv),\quad A = \prod_{1 \le i \le s_1} \Tilde{a}_i, \quad a_i = \frac{A}{\Tilde{a}_i},\] 
and
\[\Tilde{H}^{(2d)}(\bfv) = 4AQ H^{(2d)}(\bfv) - \sum_{1 \le i \le s_1} a_i \left(\Tilde{H}^{(d)}(\bfv)\right)^2,\]
one has that
\[\Phi_{2d}(\bfx;\bfy) = \frac{1}{4AQ}\left( \sum_{1 \le i \le s_1}a_i u_i^2 + \Tilde{H}^{(2d)}(\bfv) \right).\]
Hence, counting zeros of $\Phi_{2d}(\bfx;\bfy)$ is equivalent to counting zeros of
\[\sum_{1 \le i \le s_1}a_i u_i^2 + \Tilde{H}^{(2d)}(\bfv),\]
where given a particular choice of $\bfv$ there are certain congruence conditions on the variables $\bfu$ which depend on the coefficients of $H^{(2)}$ and $H_i^{(d)}(\bfv)$.\par

It seems reasonable that the arguments present in section \ref{sec:minor} should allow one to understand the minor arcs in this problem given bounds on an integral of the shape
\[I(\grm) = \int_{\grm} |g(\a)^{s_1} h(\a)|d\a\]
where 
\[g(\a) = \sum_{|x| \le X^{1/2}} e(\a x^2),\quad h(\a) = \sum_{|\bfv| \le X^{1/2d}}e(\a \Tilde{H}^{(2d)}(\bfv)),\]
and some suitable definition of the minor arcs $\grm$. Assuming some reasonable non-singularity conditions on the form $\Tilde{H}^{(2d)}(\bfv)$ one could apply H\"{o}lder's inequality, classical exponential sum bounds, and Lemma 4.3 from \cite{Birch1962} to establish that
\[I(\grm) = o(X^{s_1/2 + s_2/4 - 1})\]
whenever $s_1$ and $s_2$ satisfy 
\[\frac{s_1}{2} + \frac{s_2}{2^{2d-1}(2d-1)} > 2.\]
However, there are some technical challenges with this approach. Thus, in this paper, we decided to focus on cases where the polynomial $\Tilde{H}^{(2d)}(\bfv)$ can be decomposed as a sum of binary forms. This approach enables us to elucidate the key concepts without delving too deeply into intricate technical details. \par

We finish by noting that the problem of general weighted forms appears to be beyond the scope of our current techniques as we require some way to separate the variables of large weight. To see this consider the case of a weighted $12$ form $\Phi(\bfx;\bfy;\bfz)$ where the variable $\bfx$ has weight $4$, $\bfy$ has weight $3$, and $\bfz$ has weight $2$. This weighted $12$ form contains integral linear combinations of forms of the shape
\[H^{(3)}(\bfx)H^{(4)}(\bfy) H^{(6)}(\bfz).\]
The main tool we have been using so far is completing the square which allows us to separate the variables of large weight, which is not applicable to the above form. It appears that new ideas will be required to make further progress.

\bibliographystyle{amsbracket}

\begin{thebibliography}{18}


\bibitem{Birch1962} B. J. Birch, \emph{Forms in many variables}, Proc. Roy. Soc. Ser. A \textbf{265} (1961 / 62), 245-263.

\bibitem{Browning2017} T. D. Browning and S. Prendiville, \emph{Improvements in Birch’s theorem on forms in many variables}, J. Reine Angew. Math. \textbf{2017} (2017), no. 731, 203-234.


\bibitem{Davenport2005} H. Davenport, \emph{Analytic methods for Diophantine equations and Diophantine inequalities}, Second edition. Cambridge University Press, Cambridge, 2005.



\bibitem{Marmon2019} O. Marmon, P. Vishe, \emph{On the Hasse principle for quartic hypersurfaces}, Duke Math. J. 168 (14) 2727 - 2799, 2019.

\bibitem{Schmidt2006} W. M. Schmidt, \emph{Equations over Finite Fields: An Elementary Approach}, First edition. Springer Berlin/Heidelberg, 2006.



\bibitem{Vaughan1997} R. C. Vaughan, \emph{The Hardy-Littlewood Method}, 2nd ed., Cambridge University Press, Cambridge, 1997.

\bibitem{Vaughan2002} R. C. Vaughan, T. D. Wooley, \emph{Waring's problem: a survey, in: Number Theory for the Millenium, Vol. III (Bennett et al., eds.)}, A K Peters, Ltd., Natick, MA, 2002, 301–340.

\bibitem{Wooley1999} T. D. Wooley, \emph{On Weyl's inequality, Hua's Lemma, and exponential sums over binary forms}, Duke Math. J. \textbf{100} (1999), 373-423.




\end{thebibliography}
\providecommand{\bysame}{\leavevmode\hbox to3em{\hrulefill}\thinspace}

\end{document}